\theoremstyle{plain} 
\newtheorem{theorem}{Theorem}[section]
\newtheorem{corollary}[theorem]{Corollary}
\newtheorem{proposition}[theorem]{Proposition}
\newtheorem{lemma}[theorem]{Lemma}
\theoremstyle{definition} 
\newtheorem{definition}[theorem]{Definition}
\newtheorem{remark}[theorem]{Remark}
\newcommand{\R}{\mathbb{R}}
\numberwithin{equation}{section}
\renewcommand{\vec}[1]{\bm{#1}}
\title{Local well-posedness of a nonlinear Fokker-Planck model}
\author{Yekaterina Epshteyn}
\address[Yekaterina Epshteyn]%
{Department of Mathematics,
The University of Utah,
Salt Lake City, UT 84112, USA}
\email{epshteyn@math.utah.edu}
\author{Chang Liu}
\address[Chang Liu]%
{Department of Mathematics,
The University of Utah,
Salt Lake City, UT 84112, USA}
\email{liukamala@math.utah.edu}
\author{Chun Liu}
\address[Chun Liu]%
{Department of Applied Mathematics, Illinois Institute of Technology.
Chicago, IL 60616, USA}
\email{cliu124@iit.edu}
\author{Masashi Mizuno}
\address[Masashi Mizuno]%
{Department of Mathematics, College of Science
and Technology, Nihon University, Tokyo 101-8308 JAPAN}
\email{mizuno.masashi@nihon-u.ac.jp}
\keywords{Nonlinear Fokker-Planck equation, energy
    law, energetic-variational approach,  nonlinearity of the critical
    order, local-wellposedness}
\subjclass[2000]{35A01, 35A02, 35K15, 35Q84, 60J60}
\begin{document}

%
%


%
%
%

\begin{abstract}
Noise or fluctuations play an important role in the modeling and understanding of the behavior
of various complex systems in nature. Fokker-Planck equations are
powerful mathematical tools to study behavior of such systems subjected
to fluctuations. In this paper we establish local well-posedness 
result of a new nonlinear Fokker-Planck equation. Such equations
appear in the modeling of the grain boundary dynamics during
microstructure evolution in the polycrystalline materials and obey special energy laws.
\end{abstract}

\maketitle

\section{Introduction}
\label{sec:1}
\par Fluctuations play an essential role in the modeling and understanding of the behavior
of various complex processes. Many natural systems are affected by different
external and internal mechanisms that are not known explicitly, and
very often described as fluctuations or noise. Fokker-Planck models
are widely used
as a versatile mathematical tool to describe the macroscopic behavior of the systems
that undergo such fluctuations, see more detailed discussion and
examples in \cite{MR987631,MR2053476,MR3932086,doi:10.1137/S0036141096303359,MR3019444,MR3485127,MR4196904,MR4218540}, among
many others. In our previous work we derived Fokker-Planck type
systems as a part of grain growth models of polycrystalline materials,
e.g. \cite{DK:gbphysrev,MR2772123,MR3729587,epshteyn2021stochastic}. 

From the thermodynamical point of view, many Fokker-Planck type systems can be viewed as special cases of
general diffusion \cite{GiKiLi16}. They can be derived from the kinematic continuity equations, the conservation law, and the specific energy
dissipation law, using the energetic variational approaches \cite{onsager1931reciprocal2,GiKiLi16}.
We want to point out that while the linear and nonlinear
Fokker-Planck models with the energy laws can be obtained using such
energetic variational approach, not all Fokker-Planck systems
derived from stochastic differential equations (SDEs) by the Ito process 
have underlying energy law principles \cite{risken1996fokker}.

First,  consider the following conservation law subject to the natural boundary condition,
\begin{equation}
\label{eq:4-1}
 \left\{
  \begin{aligned}
   \frac{\partial f}{\partial t}
   +
   \nabla \cdot
   (f\vec{u})
   &=
   0,&\quad
   &t>0,\ x\in\Omega, \\
   f\vec{u}\cdot\nu|_{\partial\Omega}
   &=
   0,&\quad
   &t>0.
  \end{aligned}
 \right.
\end{equation}
Here $\Omega\subset\R^n$ is a convex domain,
$f=f(x,t):\Omega\times[0,T)\rightarrow\R$ is a probability density
function, $\vec{u}$ is the velocity vector which 
depends on $x$, $t$, and the probability density function $f$, and $\nu$
is an outer unit normal to the boundary $\partial\Omega$ of the domain
$\Omega$. We assume that the above system \eqref{eq:4-1} also satisfies
the following energy law,
\begin{equation}
 \label{eq:4-2}
  \frac{d}{dt}
  \int_\Omega \omega(f,x)\,dx
  =
  -
  \int\pi(f,x,t)
  |\vec{u}|^2\,dx.
\end{equation}
Here,  $\omega=\omega(f,x)$ represents the free energy,
  which defines the equilibrium state of the
system, and $\pi(f,x,t)$ 
is the so-called mobility function which defines the
  evolution of the system to the  equilibrium state. The specific forms of these quantities
will be
discussed in more details below. Now, take a formal time-derivative on the
left-hand side of \eqref{eq:4-2}, then using integration by parts
together with system \eqref{eq:4-1}, we get,
\begin{equation}
 \label{eq:4-3}
  \begin{split}
   \frac{d}{dt}
   \int_\Omega \omega(f,x)\,dx
   &=
   \int_\Omega \omega_f(f,x)f_t\,dx
   \\
   &=
   -\int_\Omega \omega_f(f,x)\nabla\cdot(f\vec{u})\,dx
   =  
   \int_\Omega \nabla\omega_f(f,x)\cdot(f\vec{u})\,dx.
  \end{split}
\end{equation}
Using relations \eqref{eq:4-2} and \eqref{eq:4-3}, we have that,
\begin{equation*}
 -
  \int\pi(f,x,t)
  |\vec{u}|^2\,dx
  =
  \int_\Omega \nabla\omega_f(f,x)\cdot(f\vec{u})\,dx.
\end{equation*}
Thus, the velocity field $\vec{u}$ of the model
\eqref{eq:4-1}-\eqref{eq:4-2} should satisfy the following relation,
\begin{equation}
 \label{eq:4-4}
  -
  \pi(f,x,t)
  \vec{u}
  =
  f
  \nabla(\omega_f(f,x)).
\end{equation}
In fact \eqref{eq:4-4} represents the force balance equation for the system. 
The left hand side represents the dissipative force and the right hand
side is the conservative force obtained using the
free energy of the system. This derivation is  consistent with
the general energetic variational approach in \cite{onsager1931reciprocal2,GiKiLi16}.

Let us put this discussion in the context of linear and nonlinear
Fokker-Planck models now.

 %
 
  Such systems arise in many physical and engineering
   applications, e.g.,  \cite{coleman1967thermodynamics, dafermos1978second,
     DK:gbphysrev,MR2772123,MR3729587,epshteyn2021stochastic,liu2022brinkman}. One
 example of the application of Fokker-Planck systems is the modeling
 of grain growth in polycrystalline materials. Many technologically
 useful materials appear as polycrystalline microstructures, composed
 of small monocrystalline cells or grains, separated by
 interfaces, or grain boundaries of crystallites with different
 lattice orientations. In a planar grain boundary network, a point
 where three grain
boundaries meet is called a triple junction point, see
Fig. ~\ref{figTJ}. Grain growth is a very complex multiscale and
multiphysics process influenced by the dynamics of grain boundaries,
triple junctions and the dynamics of lattice misorientations
(difference in the lattice orientations between two neighboring grains that
share the grain boundary, Fig. ~\ref{figTJ}), e.g.,
\cite{Katya-Chun-Mzn4,PATRICK2023118476,paperRickman}. In case of the grain growth
modeling \cite{epshteyn2021stochastic}, in the Fokker-Planck system, $f$ 
 may describe the joint distribution function of the lattice misorientation of the
 grain boundaries and of the position of the triple junctions, $\phi$ may
 describe the grain boundary energy density, and $D$ is
 related to the absolute temperature of the entire system
 \cite{lai2022positivity} (it can be viewed as a function of
  the fluctuation parameters of the lattice misorientations and of the
 position of the triple junctions due to fluctuation-dissipation
 principle \cite{epshteyn2021stochastic}).

\begin{figure}[h]
 \centering \includegraphics[width=8cm]{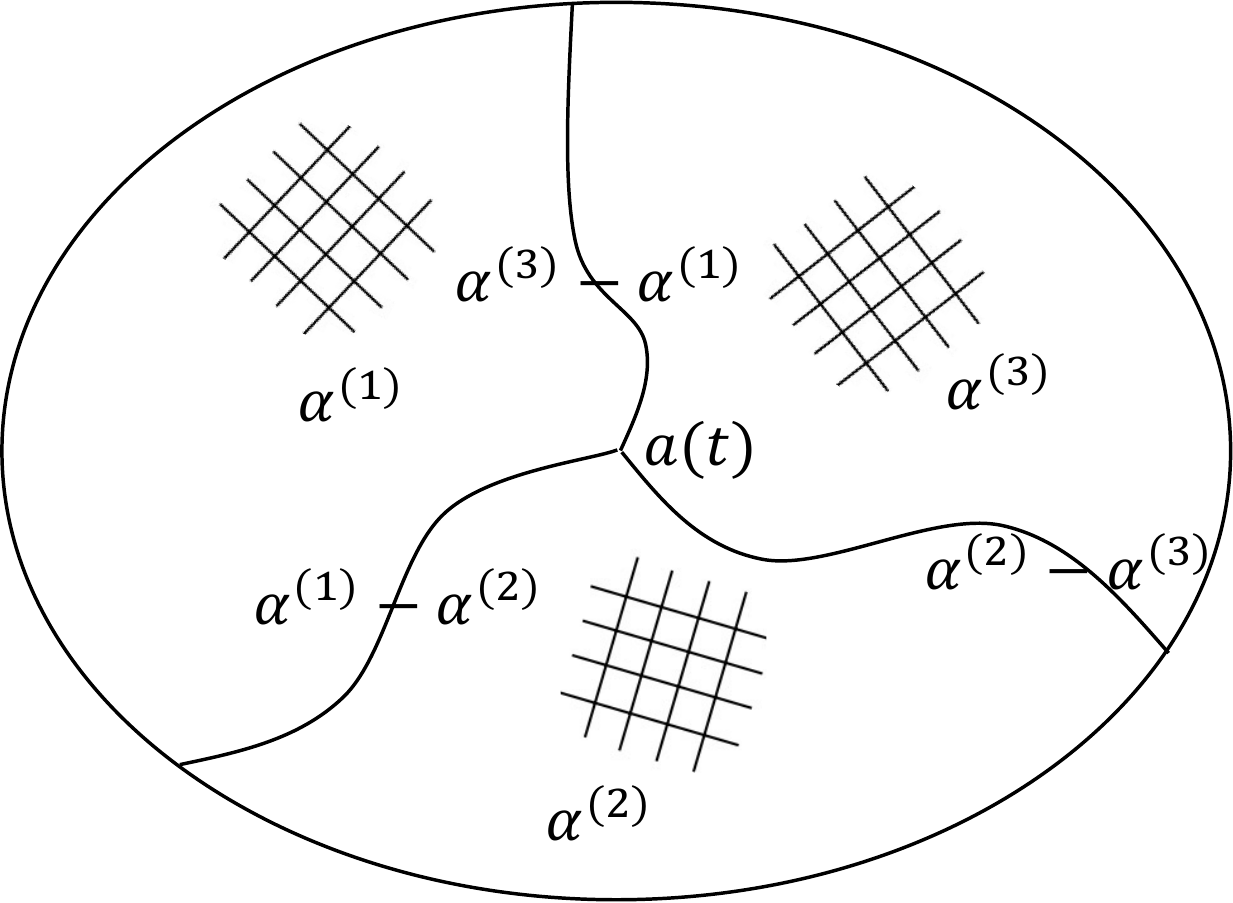}
 \caption{Illustration of the three grain boundaries that meet at a triple
 junction which is positioned at the  $\vec{a}(t)$. Each grain boundary has a lattice
 misorientation which
 is the difference between lattice (lined grids on the figure) orientations $\alpha^{(j)}, j=1, 2,
 3$ of the grains that share the grain boundary. In
 \cite{epshteyn2021stochastic}, a grain boundary network was considered
 as a system of such triple junctions and the grain boundaries
 misorientations, and was modeled by the Fokker-Planck
   equation for the joint
   distribution function of
 the position of the triple junctions and the misorientations.}
 \label{figTJ}
\end{figure}

In the cases when
 $\omega(f,x)=Df(\log f-1)+f\phi$ (free energy density) and
 $\pi(f,x,t)=f(x,t)$ (mobility), where $D>0$
 is a positive constant and the potential function $\phi=\phi(x)$ is a given function. $D$ being a
 constant  is the case of the system with homogeneous
 absolute temperature \cite{coleman1967thermodynamics,ericksen1998introduction}.  We will recover the corresponding linear
 Fokker-Planck model from  conservation and energy laws,
 \eqref{eq:4-1}-\eqref{eq:4-2}. 
 First, the direct computation yields, 
 \begin{equation*}
   f\nabla \omega_f
   =
   f
   \nabla(D\log f+\phi(x)).
 \end{equation*}
 Hence,  from \eqref{eq:4-4}, the velocity field $\vec{u}$ should be, 
 \begin{equation}
  \label{eq:4-5}
   \vec{u}
   =
   -
   \nabla(D\log f+\phi(x))=-\Big(D\frac{\nabla f}{f}+\nabla \phi(x)\Big).
 \end{equation}
Using vector field \eqref{eq:4-5} in the conservation law \eqref{eq:4-1}, we
obtain the following \emph{linear} Fokker-Planck equation,
 \begin{equation}
  \label{eq:4-6}
   \frac{\partial f}{\partial t}
   =\nabla\cdot(\nabla \phi(x) f)+\nabla\cdot(D\nabla f).   
 \end{equation}
Note, that the linear Fokker-Planck equation has the associated
Langevin equation \cite{risken1996fokker,gardiner1985handbook},
\begin{equation}
  \label{eq:4-6l}
  dx=-\nabla\phi(x)dt+\sqrt{2D}dB.
 \end{equation}
The linear Fokker-Planck equation \eqref{eq:4-6} can also be derived from
the corresponding
Langevin equation \eqref{eq:4-6l} (see \cite{MR3932086}).

 Some diffusion equations  can be interpreted using the idea of Brownian motion \cite{gardiner1985handbook}. Consider random process
\begin{equation}
dx =    \upsilon (x) dt + \sigma (x)  dB,
\end{equation}
where $B$ is standard Brownian motion. With a Taylor expansion of
probability density function $f (x, t)$, one can obtain the following PDEs:
\begin{itemize}
\item  Ito calculus provides, $f_t + \nabla \cdot(\upsilon f)= \frac{1}{2} \Delta(\sigma^2 f) $.
\item The derivation using Stratonovich integral yields, $f_t + \nabla\cdot (\upsilon f ) =\frac{1}{2}  \nabla\cdot [\sigma \nabla (\sigma f )]$.
\item One can also derive PDE with self-adjoint diffusion term,
  namely,  $ f_t + \nabla\cdot (\upsilon f ) =\frac{1}{2}  \nabla\cdot [\sigma^2 \nabla (f )]$. 
 \end{itemize}
 In many cases, these models can also be treated in the general framework of energetic variational approach.
Following the fluctuation-dissipation theorem
\cite{de2013non,kubo1966fluctuation}, taking the convection coefficient,
$\upsilon (x) = -\frac{1}{2} \sigma (x) ^2 \nabla \phi$,
and assuming that $f$ satisfies the conservation law $f_t +\nabla\cdot( u f) = 0$, the equations above 
satisfy and can also be obtained from variation of the following
energy laws \cite{GiKiLi16},
\begin{itemize} 
 \item For Ito, $\frac{d}{dt} \int_{\Omega} [ f\ln (\frac{1}{2}\sigma^2 f) + \phi f ] \, dx = - \int_{\Omega} \frac{f}{\frac{1}{2}\sigma^2 } |u|^2 \, dx.$
  \item For Stratonovich, $\frac{d}{dt} \int_{\Omega} [ f\ln (\sigma f) + \phi f ] \, dx = - \int_{\Omega} \frac{f}{\frac{1}{2}\sigma^2 } |u|^2 \, dx.$
  \item For self-adjoint case,  $\frac{d}{dt} \int_{\Omega} [ f\ln f+ \phi f ] \, dx = - \int_{\Omega} \frac{f}{\frac{1}{2}\sigma^2 } |u|^2 \, dx,$
 \end{itemize}
 where $\Omega\subset\R^d$ is a bounded domain, $d\ge 1$.

 In this paper, instead of starting from the stochastic differential equations, we will derive the system from the energetic aspects,
 by prescribing the kinematic conservation law and the energy dissipation law.
 We will  consider the case of the inhomogeneous absolute
 temperature and more general dissipation mechanism. In
   particular, we look at the case with
 $\omega(f,x)=D(x)f(\log f-1)+f\phi(x)$, 
 and
 $\pi(f,x,t)=2D(x)f/(b(x,t))^2$,  where
 $D=D(x)$ and $\phi=\phi(x)$ are positive functions. The function $b(x,t)$ is also 
 positive, and provides the extra freedom in the dissipation mechanism.
As discussed above, such
 systems may arise in the grain growth modeling, e.g. \cite{epshteyn2021stochastic,Katya-Kamala-Chun-Masashi}.
 In particular, the temperature, in terms of $D$ in this context, will account for some information of the under-resolved mechanisms in the systems,
 such as critical events/disappearance events (e.g. grain disappearance, facet/grain boundary disappearance, facet
interchange, splitting of unstable junctions and nucleation of the
grains). The specific form of the mobility function $\pi(f, x, t)$ here is the direct
 consequence of the fluctuation-dissipation theorem
 \cite{kubo1966fluctuation,de2013non,epshteyn2021stochastic}, which
 ensures that the system under consideration will approach the equilibrium configuration.

 Since, in this case, the conservative force takes the form
 \begin{equation*}
  f\nabla \omega_f
   =
   f
   \nabla(D(x)\log f+\phi(x)).
 \end{equation*}
 Hence, from \eqref{eq:4-4}, the velocity field $\vec{u}$ will be,
 \begin{equation}
  \label{eq:2-0-5}
   \vec{u}
   =
 -
  \frac{(b(x,t))^2}{2D(x)}
  \nabla(D(x)\log f+\phi(x)).
 \end{equation}
Using formula \eqref{eq:2-0-5} in the conservation law \eqref{eq:4-1}, we
obtain the \emph{nonlinear} Fokker-Planck equation (with energy law
as defined in \eqref{eq:4-2}, see also discussion below in Section~\ref{sec:2}),
\begin{equation}
\label{eq:4-8}
 \frac{\partial f}{\partial t}
  -
  \nabla \cdot
  \left(
   \frac{(b(x,t))^2}{2D(x)}
   f\nabla(D(x)\log f+\phi(x))
  \right)
  =
  0.   
\end{equation}
Note, that the nonlinearity $f\log f$ in \eqref{eq:4-8} comes as a
result of inhomogeneity of the absolute
 temperature $D(x)$. In addition, in contrast with the linear
 Fokker-Planck model \eqref{eq:4-6}, the nonlinear Fokker-Planck model
 does not have the corresponding Langevin equation.  Instead it has the
 associated stochastic differential equation with coefficients that
 depend on the probability density $f(x, t)$.

\par This work establishes
 local well-posedness of the new nonlinear Fokker-Planck type model
 \eqref{eq:4-8} subject to the boundary and initial conditions. Note, inhomogeneity and resulting non-linearity in
 the new model \eqref{eq:4-8} are very
different from the vast existing literature on the Fokker-Planck type models. They come as a result of inhomogeneous
absolute temperature in a free energy for the system
\eqref{eq:2-0-12}. Such absolute temperature gives rise to a nonstandard nonlinearity of the
form $f\nabla D(x) \log f$ in the corresponding PDE model
(see \eqref{eq:4-8}, or \eqref{eq:2-0-4} in Section~\ref{sec:2} below).
For example, any conventional entropy methods, including Bakry-Emory
method \cite{MR3497125} do not extend
to such models in a standard or trivial way. In addition models like
\eqref{eq:4-8} or \eqref{eq:2-0-4}  appear as subsystems in the much
more complex systems in the grain growth modeling in polycrystalline
materials,  and hence one needs to know properties of the classical
solutions to such PDEs. 

\par The paper is organized as follows. In Section~\ref{sec:2}, we first
state the nonlinear Fokker-Planck system and validate energy law using
given partial differential equation and the boundary conditions. After
that we show local existence of the solution to the model. In Section~\ref{sec:3},
we establish uniqueness of the local solution. Some conclusions are
given in Section~\ref{sec:4}.

\section{Existence of a local solution}
\label{sec:2}

In this section, we will provide a constructive proof of the existence
of a local classical solution of the following nonlinear Fokker-Planck
type equation with the natural boundary condition (see also
\eqref{eq:4-8} in Section~\ref{sec:1}):
\begin{equation}
 \label{eq:2-0-4}
  \left\{
   \begin{aligned}
    &\frac{\partial f}{\partial t}
    =
    -
    \nabla\cdot
    \left(
    \left(
    -
    \frac{(b(x,t))^2}{2D(x)}\nabla\phi(x)
    -
    \frac{(b(x,t))^2}{2D(x)}\log f\nabla D(x)
    \right)
    f
    \right)
    +
    \frac{1}{2}
    \nabla\cdot
    ((b(x,t))^2\nabla f),
    \quad
    x\in\Omega,\ t>0, \\
    &\left(
    \frac{(b(x,t))^2}{2D(x)}f\nabla\phi(x)
    +
    \frac{(b(x,t))^2}{2D(x)}f\log f\nabla D(x)
    +
    \frac12
    (b(x,t))^2\nabla f
    \right)
    \cdot
    \nu
    \bigg|_{\partial\Omega}
    =
    0, 
    \quad
    t>0,\\
    &f(x,0)
    =
    f_0(x),\quad
    x\in\Omega, 
   \end{aligned}
  \right.
\end{equation}
where $\Omega\subset\R^d$ is a bounded domain, $d\ge 1$. Here $b=b(x,t)$
is a positive function on $\Omega\times[0,\infty)$, $D=D(x)$ is a
positive function on $\Omega$, $f_0=f_0(x)$ is a
suitable (to be defined later through $\rho_0$ in
\eqref{eq:2-0-2} and \eqref{eq:2-0-3}) positive probability density function on $\Omega$ and
$\phi=\phi(x)$ is a function on $\Omega$. A function $f=f(x,t)>0$ is an
unknown probability density function.

The Fokker-Planck equation \eqref{eq:2-0-4} has a dissipative structure
for the following free energy,
\begin{equation}
 \label{eq:2-0-12}
  F[f]
  :=
  \int_{\Omega}
  \left(
  D(x)f(x,t)(\log f(x,t)-1)
  +
  f(x,t)\phi(x)
  \right)
  \,dx.
\end{equation}
Below, we validate an energy law for the Fokker-Planck equation
\eqref{eq:2-0-4} by performing formal calculations.

\begin{proposition}
 Let $b=b(x,t)$, $D=D(x)$, $f_0=f_0(x)$, $\phi=\phi(x)$ be
 sufficiently smooth
 functions. Then a classical solution $f$ of the Fokker-Planck equation
 \eqref{eq:2-0-4} satisfies the following energy law,
 \begin{equation}
  \label{eq:2-0-8}
   \frac{d}{dt}F[f]
   =
   -
   \int_\Omega
   \frac{(b(x,t))^2}{2D(x)}
   \left|
    \nabla
    (
    \phi(x)
    +
     D(x)\log f(x,t)
     )
   \right|^2
   f(x,t)
   \,dx.
 \end{equation}
\end{proposition}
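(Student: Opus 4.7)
The plan is to proceed by a direct formal computation, using the fact that the PDE in \eqref{eq:2-0-4} is of the form $f_t = -\nabla\cdot(f\vec{u})$ for a velocity field $\vec{u}$ derived from the free energy, and that the stated boundary condition is precisely $f\vec{u}\cdot\nu = 0$.

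First I would observe that, because $\frac{\partial}{\partial t}[Df(\log f - 1)] = D\log f \cdot f_t$ and $\frac{\partial}{\partial t}(f\phi) = \phi f_t$, differentiating \eqref{eq:2-0-12} gives
\begin{equation*}
\frac{d}{dt}F[f] = \int_\Omega (D(x)\log f + \phi(x))\, f_t \, dx.
\end{equation*}

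Next I would recast \eqref{eq:2-0-4} in a single divergence form. Using $\nabla(D\log f) = \log f\,\nabla D + D\,\nabla f/f$, a direct check shows
\begin{equation*}
-\frac{(b(x,t))^2}{2D(x)}\nabla\bigl(\phi(x) + D(x)\log f\bigr)
= -\frac{(b(x,t))^2}{2D(x)}\nabla\phi - \frac{(b(x,t))^2}{2D(x)}\log f\,\nabla D - \frac{(b(x,t))^2}{2}\frac{\nabla f}{f},
\end{equation*}
so that \eqref{eq:2-0-4} is equivalent to $f_t = -\nabla\cdot(f\vec{u})$ with $\vec{u} = -\frac{(b(x,t))^2}{2D(x)}\nabla(\phi + D\log f)$. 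The same identity, multiplied by $f$, shows that the natural boundary condition in \eqref{eq:2-0-4} is exactly $f\vec{u}\cdot\nu|_{\partial\Omega} = 0$.

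Finally I would substitute this divergence form into the expression for $\frac{d}{dt}F[f]$ and integrate by parts:
\begin{equation*}
\frac{d}{dt}F[f] = -\int_\Omega (D\log f + \phi)\,\nabla\cdot(f\vec{u})\,dx = -\int_{\partial\Omega}(D\log f + \phi)\,f\vec{u}\cdot\nu\,dS + \int_\Omega \nabla(D\log f + \phi)\cdot f\vec{u}\,dx.
\end{equation*}
The boundary integral vanishes by the boundary condition, and plugging in $\vec{u} = -\frac{b^2}{2D}\nabla(\phi+D\log f)$ yields \eqref{eq:2-0-8} immediately.

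Since the proposition assumes all data are sufficiently smooth and $f$ is a classical solution, there is no real analytic obstacle: positivity of $f$ is needed so that $\log f$ and $\nabla f/f$ make sense, but both are implicit in the wording ``classical solution'' of \eqref{eq:2-0-4}. The only step that requires care is the algebraic verification that the three flux terms in \eqref{eq:2-0-4} can be consolidated into $-f\cdot\frac{b^2}{2D}\nabla(\phi+D\log f)$; once that identification is made, the rest is a one-line integration by parts.
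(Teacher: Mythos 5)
Your proposal is correct and follows essentially the same route as the paper: differentiate $F[f]$ to get $\int_\Omega (D\log f+\phi)f_t\,dx$, consolidate the flux into $f\vec{u}$ with $\vec{u}=-\frac{b^2}{2D}\nabla(\phi+D\log f)$, and integrate by parts using the natural boundary condition. The only cosmetic difference is that the paper first writes $\vec{u}$ as the three-term expression \eqref{eq:2-0-13} and then identifies it with \eqref{eq:2-0-5}, whereas you perform that consolidation up front.
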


\begin{proof}
 Here, we will validate the energy law via calculation of the rate of change
 of the free energy $F$ (see also relevant discussion in Section~\ref{sec:1} where we
 postulated the energy law for the model and derived the velocity
 field, and hence the PDE as
 a consequence). By direct computation of
 $\frac{dF}{dt}$ and using the Fokker-Planck equation \eqref{eq:2-0-4} together
 with $\nabla f=f\nabla\log f$, we have,
 \begin{equation}
  \label{eq:2-0-6}
  \begin{split}
   \frac{d}{dt}F[f]
   &=
   \int_\Omega
   \left(
   D(x)\log f(x,t)
   +
   \phi(x)
   \right)
   \frac{\partial f}{\partial t}(x,t)
   \,dx \\
   &=
   -
   \int_\Omega
   \left(
   D(x)\log f(x,t)
   +
   \phi(x)
   \right)
   \nabla\cdot(f(x,t)\vec{u})
   \,dx,
  \end{split}
 \end{equation}
 where we introduced the velocity vector field as,
 \begin{equation}
  \label{eq:2-0-13}
   \vec{u}
   :=
   -
   \frac{(b(x,t))^2}{2D(x)}\nabla\phi(x)
   -
   \frac{(b(x,t))^2}{2D(x)}\log f(x,t)\nabla D(x)
   -
   \frac12
   (b(x,t))^2\nabla \log f(x,t).
 \end{equation}
 Note that, $\nabla (D(x)\log f(x,t))=\log f(x,t)\nabla D(x)+D(x)\nabla\log f(x,t)$,
 hence formula \eqref{eq:2-0-13} becomes \eqref{eq:2-0-5}.
 Next, applying integration by parts with the natural boundary condition
 \eqref{eq:2-0-4}, we obtain,
 \begin{multline}
  \label{eq:2-0-7}
  \int_\Omega
  \left(
  D(x)\log f(x,t)
  +
  \phi(x)
  \right)
  \nabla\cdot(f(x,t)\vec{u})
  \,dx  \\
  =
  -
  \int_\Omega
  \nabla
  \left(
  D(x)\log f(x,t)
  +
  \phi(x)
  \right)
  \cdot
  (f(x,t)\vec{u})
  \,dx.
 \end{multline}
 From \eqref{eq:2-0-6}, \eqref{eq:2-0-5}, and \eqref{eq:2-0-7},
 we obtain the energy law,
 \begin{equation*}
  \frac{d}{dt}F[f]
   =
   -
   \int_\Omega
   \frac{(b(x,t))^2}{2D(x)}
   \left|
    \nabla
    \left(
     \phi(x)
     +
     D(x)\log f(x,t)
    \right)
   \right|^2
   f(x,t)
   \,dx.
 \end{equation*}
\end{proof}

One can observe from the energy law \eqref{eq:2-0-8} that an equilibrium state
$f^{\mathrm{eq}}$ for the Fokker-Planck equation \eqref{eq:2-0-4}
satisfies $\nabla(\phi(x)+D(x)\log f^{\mathrm{eq}})=0$. Here,  we derive
the explicit representation of the equilibrium solution for the Fokker-Planck
equation \eqref{eq:2-0-4}.

\begin{proposition}
 Let $b=b(x,t)$, $D=D(x)$, $f_0=f_0(x)$, $\phi=\phi(x)$ be sufficiently smooth
 functions. Then the smooth equilibrium state $f^{\mathrm{eq}}$ for the
 Fokker-Planck equation \eqref{eq:2-0-4} is given by,
 \begin{equation}
  \label{eq:2-0-9}
  f^{\mathrm{eq}}(x)
   =
   \exp
   \left(
   -\frac{\phi(x)-\Cl{const:2.9}}{D(x)}
   \right),
 \end{equation}
 where $\Cr{const:2.9}$ is a constant, which satisfies,
 \begin{equation*}
  \int_\Omega
   \exp
   \left(
    -\frac{\phi(x)-\Cr{const:2.9}}{D(x)}
   \right)
   \,dx
   =
   1.
 \end{equation*}
\end{proposition}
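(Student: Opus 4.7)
The plan is to use the energy law \eqref{eq:2-0-8} to pin down the pointwise equation that any smooth equilibrium must satisfy, then integrate it explicitly, and finally fix the free constant by the normalization condition. I will then verify that the resulting function indeed solves \eqref{eq:2-0-4} with $\partial_t f = 0$.

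First I would use the fact that an equilibrium state $f^{\mathrm{eq}}$ is time-independent, so $\frac{d}{dt}F[f^{\mathrm{eq}}]=0$. The energy law \eqref{eq:2-0-8} then gives
\begin{equation*}
\int_{\Omega}\frac{(b(x,t))^{2}}{2D(x)}\bigl|\nabla(\phi(x)+D(x)\log f^{\mathrm{eq}}(x))\bigr|^{2}f^{\mathrm{eq}}(x)\,dx=0.
\end{equation*}
Since $b>0$, $D>0$, and $f^{\mathrm{eq}}>0$ is smooth, the nonnegative integrand must vanish pointwise on $\Omega$, so $\nabla(\phi(x)+D(x)\log f^{\mathrm{eq}}(x))\equiv 0$ on $\Omega$. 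Because $\Omega$ is a (connected) bounded domain, this forces $\phi(x)+D(x)\log f^{\mathrm{eq}}(x)=\Cr{const:2.9}$ for some constant $\Cr{const:2.9}\in\R$, and solving for $f^{\mathrm{eq}}$ yields exactly the representation \eqref{eq:2-0-9}.

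Next I would fix the constant $\Cr{const:2.9}$ by the probability normalization. Define
\begin{equation*}
G(c):=\int_{\Omega}\exp\!\left(-\frac{\phi(x)-c}{D(x)}\right)dx.
\end{equation*}
Under the smoothness assumptions, $\phi$ and $D$ are continuous on $\overline{\Omega}$ and $D\geq D_{\min}>0$, so $G$ is smooth, strictly increasing in $c$, with $G(c)\to 0$ as $c\to-\infty$ and $G(c)\to+\infty$ as $c\to+\infty$. By the intermediate value theorem there is a unique $\Cr{const:2.9}$ for which $G(\Cr{const:2.9})=1$, which is precisely the condition listed in the statement.

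Finally I would check that the formula \eqref{eq:2-0-9} really gives a classical stationary solution of the PDE \eqref{eq:2-0-4}. With $f^{\mathrm{eq}}$ so defined one has $\phi(x)+D(x)\log f^{\mathrm{eq}}(x)\equiv\Cr{const:2.9}$, hence the velocity field \eqref{eq:2-0-13} (equivalently \eqref{eq:2-0-5}) satisfies $\vec{u}\equiv 0$. Substituting into the conservation law \eqref{eq:4-1} shows that $\partial_{t}f^{\mathrm{eq}}=-\nabla\cdot(f^{\mathrm{eq}}\vec{u})=0$, and the natural boundary condition in \eqref{eq:2-0-4} reduces to $0\cdot\nu=0$ trivially. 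The only mildly delicate point is the first step, where one has to rule out the degenerate possibility $f^{\mathrm{eq}}\equiv 0$; this is done by the assumption that $f^{\mathrm{eq}}$ is a smooth probability density, so it is strictly positive on an open set, and combined with $\nabla(\phi+D\log f^{\mathrm{eq}})=0$ on that set, the exponential formula extends $f^{\mathrm{eq}}$ as a strictly positive smooth function on all of $\Omega$. There is no real analytical obstacle beyond this; the argument is essentially a direct consequence of \eqref{eq:2-0-8} together with a one-dimensional IVT to select the normalizing constant.
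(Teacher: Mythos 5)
Your proof is correct and follows essentially the same route as the paper: set $\frac{d}{dt}F[f^{\mathrm{eq}}]=0$ in the energy law \eqref{eq:2-0-8}, conclude that the nonnegative integrand vanishes so that $\nabla(\phi+D\log f^{\mathrm{eq}})=0$, and integrate to get \eqref{eq:2-0-9}. Your additional steps (the intermediate value theorem argument pinning down the normalizing constant and the direct verification that the formula yields a stationary solution of \eqref{eq:2-0-4}) go beyond what the paper writes out but are consistent with it.
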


\begin{proof}
 We have from the energy law \eqref{eq:2-0-8} that,
 \begin{equation*}
  0
   =
   \frac{d}{dt}F[f^{\mathrm{eq}}]
   =
   -
   \int_\Omega
   \frac{(b(x,t))^2}{2D(x)}
   \left|
    \nabla
    \left(
     \phi(x)
     +
     D(x)\log f^{\mathrm{eq}}(x)
    \right)
   \right|^2
   f^{\mathrm{eq}}(x)
   \,dx,
 \end{equation*}
 hence $\nabla \left(\phi(x) + D(x)\log f^{\mathrm{eq}}
 \right)=0$. Thus, there is a constant $\Cr{const:2.9}$ such that
 \begin{equation*}
  \phi(x)
   +   
   D(x)
   \log f^{\mathrm{eq}}(x)
   =
   \Cr{const:2.9},
 \end{equation*}
 and hence
 \begin{equation*}
   f^{\mathrm{eq}}(x)
   =
   \exp
   \left(
    -
    \frac{\phi(x)-\Cr{const:2.9}}{D(x)}
   \right).
 \end{equation*}
\end{proof}

\begin{remark}
 Note that the nonlinear Fokker-Planck equation \eqref{eq:2-0-4} can also be derived
 from the dissipation property of the free energy $F[f]$
 \eqref{eq:2-0-12} along with the Fokker-Planck equation,
 \begin{equation}
  \label{eq:2-0-14}
  \frac{\partial f}{\partial t}
   =
   -\nabla\cdot
   \left(
    \vec{a}(x,t)f\right)
   +
   \frac12\nabla\cdot
   \left(
    (b(x,t))^2\nabla f
   \right)
 \end{equation}
 subject to the natural boundary condition,
 $(\vec{a}(x,t)f+\frac12(b(x,t))^2\nabla f)\cdot
    \nu|_{\partial\Omega}=0,$ 
 \cite{Katya-Kamala-Chun-Masashi}. Let us briefly review the
 derivation \cite{Katya-Kamala-Chun-Masashi}. Indeed,
 by \eqref{eq:2-0-14} and using the integration by parts, the rate of
 change of the free energy
 $\frac{d}{dt}F[f]$ is calculated as,
 \begin{equation*}
  \begin{split}
   \frac{d}{dt}F[f]
   &=
   \int_\Omega
   (D(x)\log f(x,t)+\phi(x))
   \frac{\partial f}{\partial t}(x,t)\,dx \\
   &=
   -\int_\Omega
   (D(x)\log f(x,t)+\phi(x))
   \nabla\cdot
   \left(
   \left(
   \vec{a}(x,t)
   -
   \frac12 
   (b(x,t))^2\nabla \log f(x,t)
   \right)
   f(x,t)\right)
   \,dx \\
   &=
   \int_\Omega
   \nabla
   (D(x)\log f(x,t)+\phi(x))
   \cdot
   \left(
   \vec{a}(x,t)
   -
   \frac12 
   (b(x,t))^2\nabla \log f(x,t)
   \right)
   f(x,t)
   \,dx.
  \end{split}
 \end{equation*}
 Since
 \begin{equation*}
  \nabla (D(x)\log f(x,t)+\phi(x))
   =\log f(x,t)\nabla D(x)+D(x)\nabla \log f(x,t)+\nabla\phi(x),
 \end{equation*}
 we obtain the energy dissipation estimate as,
 \begin{equation*}
  \frac{d}{dt}F[f]
   =
   -
   \int_\Omega
   \frac{(b(x,t))^2}{2D(x)}
   \left|
    \nabla (D(x)\log f(x,t)+\phi(x))
   \right|^2
   f(x,t)\,dx
 \end{equation*}
 provided the following relation holds,
 \begin{equation}
  \label{eq:2-0-15}
  \vec{a}(x,t)
   =
   -\frac{(b(x,t))^2}{2D(x)}\nabla\phi(x)
   -\frac{(b(x,t))^2}{2D(x)}\log f(x,t)\nabla D(x).
 \end{equation}
 Note that when $D(x)$ is independent of $x$, $\nabla D(x)=0$ and hence
 \eqref{eq:2-0-4} becomes a \emph{linear} Fokker-Planck
 equation. The relation \eqref{eq:2-0-15} is consistent with the
 \emph{fluctuation-dissipation relation}, which should guarantee not only the
 dissipation property of the free energy $F[f]$,  but also that the
 solution of the nonlinear Fokker-Planck equation \eqref{eq:2-0-4}
 converges to the equilibrium state $f^{\mathrm{eq}}$ given by
 \eqref{eq:2-0-9} (see also \cite{epshteyn2021stochastic} for more
 detailed discussion).
\end{remark}
Now, let us
define the scaled function $\rho$ by taking the ratio of $f$ and
$f^{\mathrm{eq}}$ \eqref{eq:2-0-9},
\begin{equation}
 \label{eq:2-0-11}
 \rho(x,t)
  =
  \frac{f(x,t)}{f^{\mathrm{eq}}(x)},
  \quad
  \text{or}
  \quad
  f(x,t)
  =
  \rho(x,t)
  f^{\mathrm{eq}}(x)
  =
  \rho(x,t)
   \exp
   \left(
    -
    \frac{\phi(x)-\Cr{const:2.9}}{D(x)}
   \right).
\end{equation}
This auxiliary function was also employed in \cite[Theorem
2.1]{MR3497125} to study long-time asymptotics of the solutions of
linear Fokker-Planck equations. Here, we will use the
scaled function $\rho$ as a part of local well-posedness study. Hence,
below, we will reformulate the nonlinear Fokker-Planck equation
\eqref{eq:2-0-4} into a model for the scaled function $\rho$. We have,
\begin{equation*}
 f^{\mathrm{eq}}
 \frac{\partial \rho}{\partial t}
  =
  \nabla\cdot
  \left(
   \frac{(b(x,t))^2}{2D(x)}
   f^{\mathrm{eq}}\rho
    \left(
     \nabla\phi(x)
     +
     \log(f^{\mathrm{eq}}\rho)\nabla D(x)
     +
     D(x)\nabla\log(f^{\mathrm{eq}}\rho)
    \right)
   \right).
\end{equation*}
Next, using the equilibrium state \eqref{eq:2-0-9}, we have,
\begin{equation*}
 \nabla D(x)\log f^{\mathrm{eq}}
  +
  D(x)\nabla (\log f^{\mathrm{eq}})
  +
  \nabla\phi(x)
  =
  0.
\end{equation*}
In addition, note that $\log \rho \nabla D(x)+D(x)\nabla \log \rho =\nabla(D(x)\log
\rho)$. Thus,  the scaled function $\rho$ satisfies,
\begin{equation*}
 f^{\mathrm{eq}}
  \frac{\partial \rho}{\partial t}
  =
  \nabla\cdot
  \left(
   \frac{(b(x,t))^2}{2D(x)}
   f^{\mathrm{eq}}\rho
   \nabla
   \left(
    D(x)\log\rho
   \right)
  \right).
\end{equation*}
Employing the property of the equilibrium state \eqref{eq:2-0-9} again, the natural boundary
condition \eqref{eq:2-0-4} becomes,
\begin{equation*}
  \left(
   \frac{(b(x,t))^2}{2D(x)}
   f^{\mathrm{eq}}\rho
   \nabla
   \left(
    D(x)\log\rho
   \right)
  \right)
  \cdot
  \nu
  \bigg|_{\partial\Omega}
  =
  0.
\end{equation*}
Therefore, the nonlinear Fokker-Planck equation
\eqref{eq:2-0-4} transforms into the following initial-boundary value
problem for $\rho$ defined in \eqref{eq:2-0-11},
\begin{equation}
 \label{eq:2-0-10}
  \left\{
   \begin{aligned}
    &f^{\mathrm{eq}}(x)
    \frac{\partial \rho}{\partial t}
    =
    \nabla\cdot
    \left(
    \frac{(b(x,t))^2}{2D(x)}
    f^{\mathrm{eq}}(x)\rho
    \nabla
    \left(
    D(x)\log\rho
    \right)
    \right),&\quad
    &x\in\Omega,\
    t>0, \\
    &\left(
    \frac{(b(x,t))^2}{2D(x)}
    f^{\mathrm{eq}}(x)\rho
    \nabla
    \left(
    D(x)\log\rho
    \right)
    \right)
    \cdot
    \nu
    \bigg|_{\partial\Omega}
    =
    0,&\quad
    &t>0, \\
    &\rho(0,x)
    =
    \rho_0(x)=\frac{f_0(x)}{f^{\mathrm{eq}}(x)},&
    \quad
    &x\in\Omega.
   \end{aligned}
  \right.
\end{equation}
Next, the free energy \eqref{eq:2-0-12} and the energy law
 \eqref{eq:2-0-8} can also be stated in terms of $\rho$.  Using $D(x)\log f^{\mathrm{eq}}(x)=-\phi(x)+\Cr{const:2.9} $ from
 \eqref{eq:2-0-9}, we obtain,
 \begin{equation}
\label{rhofe}
  F[f]=\int_\Omega
   \left(
    D(x)(\log\rho-1)+\Cr{const:2.9}
   \right)
   \rho f^{\mathrm{eq}}(x)
   \,dx,
 \end{equation}
 and,
 \begin{equation}
\label{rhodfe}
  \frac{d}{dt}F[f]
   =
   -
   \int_\Omega
   \frac{(b(x,t))^2}{2D(x)}
   \left|
    \nabla
    (
    D(x)\log \rho
    )
   \right|^2
   \rho f^{\mathrm{eq}}(x)
   \,dx.
 \end{equation}
 Thus, it is clear from \eqref{rhofe}-\eqref{rhodfe} that weighted
 $L^2$ space, $L^2(\Omega, f^{\mathrm{eq}}(x)\,dx)$ can play
 an important role in studying the equation \eqref{eq:2-0-10} (see for example,
 \cite{MR1812873, epshteyn2021stochastic}).

However, hereafter, we study a classical solution for the problem
\eqref{eq:2-0-10}, and we consider H\"older spaces and norms as
defined below. We give now the notion of a classical solution of
the problem \eqref{eq:2-0-10}.

\begin{definition}
 A function $\rho=\rho(x,t)$ is a classical solution of the problem
 \eqref{eq:2-0-10} in $\Omega\times[0,T)$ if $\rho\in
 C^{2,1}(\Omega\times(0,T))\cap C^{1,0}(\overline{\Omega}\times[0,T))$,
$\rho(x,t)>0$ for $(x,t)\in \Omega\times[0,T)$, and satisfies equation
 \eqref{eq:2-0-10} in a classical sense.
\end{definition}
To state assumptions and the main result, we also define the parabolic
H\"older spaces and norms. For the H\"older exponent
$0<\alpha<1$, the time interval $T>0$, and
the function $f$ on $\Omega\times[0,T)$, we define the
supremum norm $\|f\|_{C(\Omega\times[0,T))}$, the H\"older semi-norms
$[f]_{\alpha,\Omega\times[0,T)}$, and $\langle f\rangle_{\alpha,
\Omega\times[0,T)}$ as,
\begin{equation}
 \begin{split}
  \|f\|_{C(\Omega\times[0,T))}
  &=
  \sup_{x\in\Omega,\ t\in[0,T)}
  |f(x,t)|, \\
  [f]_{\alpha,\Omega\times[0,T)}
  &:=
  \sup_{x,x'\in\Omega,\ t\in[0,T)}
  \frac{|f(x,t)-f(x',t)|}{|x-x'|^\alpha}, \\
  \langle f\rangle_{\alpha,\Omega\times[0,T)}
  &:=
  \sup_{x\in\Omega,\ t,t'\in[0,T)}
  \frac{|f(x,t)-f(x,t')|}{|t-t'|^\alpha},
 \end{split}
\end{equation}
here $|x-x'|$ denotes the euclidean distance between
the vector variables $x$ and $x'$ and $|t-t'|$ denotes
the absolute value of $t-t'$.  For the H\"older
exponent $0<\alpha<1$, the derivative of order $k=0,1,2$, and the time
interval $T>0$, we define the parabolic H\"older spaces
$C^{k+\alpha,(k+\alpha)/2}(\Omega\times[0,T))$ as,
\begin{equation}
 C^{k+\alpha,(k+\alpha)/2}(\Omega\times[0,T))
  :=
  \{f:\Omega\times[0,T)\rightarrow\R,\ \|f\|_{C^{k+\alpha,(k+\alpha)/2}(\Omega\times[0,T))}<\infty\},
\end{equation}
where
\begin{equation}
 \begin{split}
  \|f\|_{C^{\alpha,\alpha/2}(\Omega\times[0,T))}
  &:=
  \|f\|_{C(\Omega\times[0,T))}
  +
  [f]_{\alpha,\Omega\times[0,T)}
  +
  \langle f\rangle_{\alpha/2,\Omega\times[0,T)}, \\
  \|f\|_{C^{1+\alpha,(1+\alpha)/2}(\Omega\times[0,T))}
  &:=
  \|f\|_{C(\Omega\times[0,T))}
  +
  \|\nabla f\|_{C(\Omega\times[0,T))} \\
  &\qquad
  +
  [\nabla f]_{\alpha,\Omega\times[0,T)}
  +
  \langle f\rangle_{(1+\alpha)/2,\Omega\times[0,T)}
  +
  \langle \nabla f\rangle_{\alpha/2,\Omega\times[0,T)}, \\
  \|f\|_{C^{2+\alpha,1+\alpha/2}(\Omega\times[0,T))}
  &:=
  \|f\|_{C(\Omega\times[0,T))}
  +
  \|\nabla f\|_{C(\Omega\times[0,T))}
  +
  \|\nabla^2 f\|_{C(\Omega\times[0,T))}
  +
  \left\|
  \frac{\partial f}{\partial t}
  \right\|_{C(\Omega\times[0,T))} \\
  &\qquad
  +
  [\nabla^2 f]_{\alpha,\Omega\times[0,T)}
  +
  \left[
  \frac{\partial f}{\partial t}
  \right]_{\alpha,\Omega\times[0,T)} \\
  &\qquad
  +
  \langle \nabla f\rangle_{(1+\alpha)/2,\Omega\times[0,T)}
  +
  \langle \nabla^2f\rangle_{\alpha/2,\Omega\times[0,T)}
  +
  \left\langle
  \frac{\partial f}{\partial t}
  \right\rangle_{\alpha/2,\Omega\times[0,T)}.
 \end{split}
\end{equation}
It is well-known that the parabolic H\"older space
$C^{k+\alpha,(k+\alpha)/2}(\Omega\times[0,T))$ is
a Banach space. More properties of the H\"older spaces
can be found in \cite{MR1406091, MR0241822, MR1465184}.  Next, we give
assumptions for the coefficients and the initial data. First, we assume
the strong positivity for the coefficients $b$ and $D$, namely, there
are constants $\Cl{const:2.1}, \Cl{const:2.8}>0$ such that for
$x\in\Omega$ and $t>0$,
\begin{equation}
 \label{eq:2-0-1}
 b(x,t)\geq \Cr{const:2.1},\quad
  D(x)\geq \Cr{const:2.8}.
\end{equation}
Next, we assume the H\"older regularity for $0<\alpha<1$: coefficients
$b(x,t)$, $\phi(x)$, $D(x)$, an initial datum
$\rho_0(x)$ and a domain $\Omega$ satisfy,
\begin{equation}
 \label{eq:2-0-2}
  b^2\in C^{1+\alpha,(1+\alpha)/2}(\Omega\times[0,T)),\
  \phi\in C^{2+\alpha}(\Omega),\
  D\in C^{2+\alpha}(\Omega),\
  \partial\Omega\ \text{is}\ C^{2+\alpha},\
  \text{and}\
  \rho_0\in C^{2+\alpha}(\Omega).
\end{equation}
As a consequence of the above assumptions, $f^{\mathrm{eq}}$ is in
$C^{2+\alpha}(\Omega)$. Finally, assume the compatibility condition
for the initial data
$\rho_0$,
\begin{equation}
 \label{eq:2-0-3}
  \nabla( D(x)\log\rho_0) \cdot \nu
  \bigg|_{\partial\Omega}
  =
  0.
\end{equation}
Since $b(x,t)$, $D(x)$, $f^{\mathrm{eq}}$, and $\rho$ are
positive, \eqref{eq:2-0-3} is sufficient for the compatibility condition of
\eqref{eq:2-0-10}.

Now we are ready to state the main theorem about existence of a classical solution
of \eqref{eq:2-0-10}.

\begin{theorem}
 \label{thm:2-0-1} 
 Let coefficients $b(x,t)$, $\phi(x)$, $D(x)$, a positive probability
 density function $\rho_0(x)$ and a bounded domain $\Omega$ satisfy the
 strong positivity \eqref{eq:2-0-1}, the H\"older regularity
 \eqref{eq:2-0-2} for $0<\alpha<1$, and the compatibility for the
 initial data \eqref{eq:2-0-3}, respectively. Then,  there exist a time interval $T>0$
 and a classical solution $\rho=\rho(x,t)$ of \eqref{eq:2-0-10} on
 $\Omega\times[0,T)$ with the H\"older regularity $\rho\in
 C^{2+\alpha,1+\alpha/2}(\Omega\times[0,T))$.
\end{theorem}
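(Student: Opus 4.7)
The plan is to construct the solution by a Banach contraction argument applied to a linearization of \eqref{eq:2-0-10}, carried out in the parabolic Hölder space $C^{2+\alpha,1+\alpha/2}(\Omega\times[0,T])$. Setting $m := \tfrac{1}{2}\min_{\overline\Omega}\rho_0 > 0$ and $M := 2(1+\|\rho_0\|_{C^{2+\alpha}(\Omega)})$, I introduce the closed convex subset
\begin{equation*}
X_{M,T} := \bigl\{ \tilde\rho \in C^{2+\alpha,1+\alpha/2}(\Omega\times[0,T]) :\ \tilde\rho(\cdot,0)=\rho_0,\ \|\tilde\rho\|_{C^{2+\alpha,1+\alpha/2}}\le M,\ \tilde\rho\ge m\bigr\}.
\end{equation*}
Expanding $\nabla(D\log\rho) = \log\rho\,\nabla D + D\,\nabla\log\rho$, equation \eqref{eq:2-0-10} takes the semilinear divergence form
\begin{equation*}
f^{\mathrm{eq}}\partial_t\rho - \nabla\cdot\Bigl(\tfrac{b^2}{2}f^{\mathrm{eq}}\nabla\rho\Bigr) = \nabla\cdot\Bigl(\tfrac{b^2}{2D}f^{\mathrm{eq}}(\rho\log\rho)\nabla D\Bigr),
\end{equation*}
with a corresponding flux-type boundary condition. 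Because $s\mapsto s\log s$ is smooth on $[m,\infty)$, this reduces the problem to an iteration of linear parabolic problems.

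For each $\tilde\rho\in X_{M,T}$, I define $\rho=\mathcal{T}(\tilde\rho)$ as the unique solution of the linear oblique-derivative problem
\begin{equation*}
\left\{
\begin{aligned}
 & f^{\mathrm{eq}}\partial_t\rho - \nabla\cdot\Bigl(\tfrac{b^2}{2}f^{\mathrm{eq}}\nabla\rho\Bigr) = \nabla\cdot\Bigl(\tfrac{b^2}{2D}f^{\mathrm{eq}}(\tilde\rho\log\tilde\rho)\nabla D\Bigr) & & \text{in }\Omega\times(0,T),\\
 & \nabla\rho\cdot\nu = -\tfrac{\tilde\rho\log\tilde\rho}{D}\,\nabla D\cdot\nu & & \text{on }\partial\Omega\times(0,T),\\
 & \rho(\cdot,0) = \rho_0 & & \text{in }\Omega.
\end{aligned}
\right.
\end{equation*}
The strong positivity \eqref{eq:2-0-1} and regularity \eqref{eq:2-0-2} make the principal part uniformly parabolic with $C^{\alpha,\alpha/2}$ coefficients. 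Moreover, \eqref{eq:2-0-3} is equivalent to $\nabla\rho_0\cdot\nu = -\tfrac{\rho_0\log\rho_0}{D}\nabla D\cdot\nu$ on $\partial\Omega$ (since $\nabla(D\log\rho_0)\cdot\nu = \log\rho_0\,\nabla D\cdot\nu + \tfrac{D}{\rho_0}\nabla\rho_0\cdot\nu$), which is precisely the first-order compatibility required at $t=0$. The classical parabolic Schauder theory for oblique-derivative problems then yields a unique solution $\rho\in C^{2+\alpha,1+\alpha/2}(\Omega\times[0,T])$ with the a priori bound
\begin{equation*}
\|\rho\|_{C^{2+\alpha,1+\alpha/2}} \le C\Bigl(\|\rho_0\|_{C^{2+\alpha}} + \|\tilde\rho\log\tilde\rho\|_{C^{1+\alpha,(1+\alpha)/2}}\Bigr),
\end{equation*}
with $C$ independent of $T\in(0,1]$.

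The fixed-point argument has two steps. First, I verify $\mathcal{T}(X_{M,T})\subset X_{M,T}$ for $T$ small: composition estimates in parabolic Hölder spaces, valid since $s\mapsto s\log s$ is $C^2$ on $[m,\infty)$, bound the forcing norm by some $K(m,M)$. Working with the difference $w = \rho-\rho_0$, which vanishes at $t=0$, and applying the Schauder estimate for zero-initial-data linear problems (whose constant stays uniform as $T\to 0$) together with elementary interpolation such as $\|w\|_{C(\Omega\times[0,T])}\le T\|\partial_t\rho\|_{C}$, I arrange $\|\rho\|_{C^{2+\alpha,1+\alpha/2}}\le M$ and $\|\rho-\rho_0\|_{C(\Omega\times[0,T])}\le \tfrac{1}{2}\min_{\overline\Omega}\rho_0$ — and hence $\rho\ge m$ — for sufficiently small $T$. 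Second, for $\tilde\rho_1,\tilde\rho_2\in X_{M,T}$ the difference $\mathcal{T}(\tilde\rho_1)-\mathcal{T}(\tilde\rho_2)$ solves a linear problem with zero initial data whose forcing is controlled by the Lipschitz bound $|s\log s - s'\log s'|\le C(m,M)|s-s'|$ on $[m,M]$. Combining the Schauder estimate with the same short-time smallness mechanism yields a contraction factor of order $T^{\delta}$ in a slightly weaker parabolic Hölder norm, producing the unique fixed point $\rho\in X_{M,T}$, which by construction is the classical solution of \eqref{eq:2-0-10} claimed in Theorem~\ref{thm:2-0-1}.

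The principal obstacle is controlling the non-standard nonlinear term $\rho\log\rho$ and its boundary trace in the Hölder norms demanded by Schauder theory; this is possible only because the iterates stay trapped in $[m,M]$, on which $s\mapsto s\log s$ is smooth and bi-Lipschitz. A subtle related point is the preservation of the strict positivity $\rho\ge m$ along the iteration: a direct maximum-principle argument is awkward here because of the lower-order term involving $\nabla D$, and is instead ensured by the short-time smallness of $\|\rho-\rho_0\|_{C^0}$. Once these two points are handled, the Schauder linear theory and the contraction estimate fit together in a standard way to deliver the local classical solution with the asserted Hölder regularity.
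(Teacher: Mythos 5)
Your route is genuinely different from the paper's. The paper substitutes $h=D\log\rho$ (which makes positivity of $\rho$ automatic and turns the equation into one with homogeneous Neumann data and a scale-critical quadratic gradient nonlinearity) and then $\xi=h-h_0$ (so that the unknown vanishes at $t=0$); you instead iterate directly on $\rho$, freezing the single non-standard coefficient $\tilde\rho\log\tilde\rho$ in both the bulk forcing and an inhomogeneous oblique boundary condition, and you enforce positivity by keeping the iterates in $[m,\infty)$ via short-time $C^0$ closeness to $\rho_0$. The reformulation, the boundary condition $\nabla\rho\cdot\nu=-\tfrac{\rho\log\rho}{D}\nabla D\cdot\nu$, the equivalence of \eqref{eq:2-0-3} with the first-order compatibility condition, and the contraction step (where $\tilde\rho_1-\tilde\rho_2$ vanishes at $t=0$, so its lower-order H\"older norms carry a factor that tends to $0$ with $T$) are all correct, and this is a viable alternative that avoids the two changes of variables. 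What it costs is an oblique-derivative Schauder theory with inhomogeneous, solution-dependent boundary data, and a separate positivity argument that the paper gets for free.

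There is, however, a genuine gap in the invariance step $\mathcal{T}(X_{M,T})\subset X_{M,T}$ as you have written it. Your Schauder bound reads $\|\rho\|_{C^{2+\alpha,1+\alpha/2}}\le C\bigl(\|\rho_0\|_{C^{2+\alpha}}+\|\tilde\rho\log\tilde\rho\|_{C^{1+\alpha,(1+\alpha)/2}}\bigr)$, and for $\tilde\rho\in X_{M,T}$ the second term is of size comparable to $M$ (it is the full norm of the frozen coefficient, which does \emph{not} shrink as $T\to0$). So the right-hand side is roughly $C\|\rho_0\|+C\,K(m,M)$ with $K(m,M)\gtrsim M$, and this cannot be $\le M=2(1+\|\rho_0\|_{C^{2+\alpha}})$ unless the Schauder constant is small — the inequality you need is circular in $M$. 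The interpolation $\|w\|_{C}\le T\|\partial_t\rho\|_{C}$ for $w=\rho-\rho_0$ does not rescue this: it makes only the low-order norms of the \emph{solution} small, while the top-order seminorms $[\nabla^2 w]_{\alpha}$ etc.\ are controlled solely by the forcing, which is not small in $T$. The fix is exactly the device the paper uses: split the nonlinear coefficient as $\tilde\rho\log\tilde\rho=\rho_0\log\rho_0+(\tilde\rho\log\tilde\rho-\rho_0\log\rho_0)$, so that the first piece gives a fixed forcing $g_0$ depending only on the data, and the second piece vanishes at $t=0$ and hence (by the decay estimates of the type \eqref{eq:2-2-2}--\eqref{eq:2-2-3}) contributes $O(\kappa(T)\,C(m,M))$ with $\kappa(T)\to0$; then one must take $M$ of the form $2C\|g_0\|$ (i.e.\ $M$ must absorb the Schauder constant, as in \eqref{eq:2-3-6}, rather than being fixed a priori as $2(1+\|\rho_0\|_{C^{2+\alpha}})$) and shrink $T$ so that the $M$-dependent remainder fits under the slack. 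You already deploy precisely this vanishing-at-$t=0$ mechanism in the contraction step, so the repair is routine, but without it the self-map step does not close.
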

\begin{corollary}
\label{cor:main_th}
  Let coefficients $b(x,t)$, $\phi(x)$, $D(x)$, and a bounded domain
 $\Omega$ satisfy the strong positivity \eqref{eq:2-0-1} and the H\"older
 regularity \eqref{eq:2-0-2} for $0<\alpha<1$, respectively. Let $f_0$ be a positive
 probability density function from $C^{2+\alpha}(\Omega)$, which
 is positive everywhere, and satisfies the compatibility condition,
 \begin{equation*}
  \nabla
   \left(
    \phi(x)
    +
    \log (D(x)f_0)
   \right)
   \cdot
  \nu
  \bigg|_{\partial\Omega}
  =
  0.
 \end{equation*}
 Then,  there exist a time interval $T>0$ and a classical solution
 $f=f(x,t)$ of \eqref{eq:2-0-4} on $\Omega\times[0,T)$ with the H\"older
 regularity $f\in C^{2+\alpha,1+\alpha/2}(\Omega\times[0,T))$.
\end{corollary}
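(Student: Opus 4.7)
The plan is to reduce the corollary directly to Theorem \ref{thm:2-0-1} via the scaling $\rho(x,t) = f(x,t)/f^{\mathrm{eq}}(x)$ introduced in \eqref{eq:2-0-11}. The calculations already carried out in Section \ref{sec:2} show that a positive function $f$ satisfies the PDE and boundary condition in \eqref{eq:2-0-4} if and only if $\rho = f/f^{\mathrm{eq}}$ satisfies \eqref{eq:2-0-10}, so the corollary amounts to a restatement of Theorem \ref{thm:2-0-1} under this change of variable.

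Concretely, I would set $\rho_0(x) := f_0(x)/f^{\mathrm{eq}}(x)$ and verify the hypotheses of Theorem \ref{thm:2-0-1}. Strict positivity of $\rho_0$ is immediate from $f_0 > 0$ and $f^{\mathrm{eq}} > 0$ on the compact set $\overline{\Omega}$. Since $\phi, D \in C^{2+\alpha}(\Omega)$ with $D \geq \Cr{const:2.8} > 0$, the equilibrium $f^{\mathrm{eq}} = \exp(-(\phi - \Cr{const:2.9})/D)$ lies in $C^{2+\alpha}(\Omega)$ and is bounded below by a positive constant on $\overline{\Omega}$, so the quotient $\rho_0 = f_0/f^{\mathrm{eq}}$ inherits $C^{2+\alpha}(\Omega)$ regularity. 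For the compatibility condition \eqref{eq:2-0-3}, I would use the identity $D(x) \log \rho_0 = D(x) \log f_0 + \phi(x) - \Cr{const:2.9}$, which follows from \eqref{eq:2-0-9}, to obtain
\begin{equation*}
\nabla(D(x) \log \rho_0) \cdot \nu \big|_{\partial \Omega} = \nabla(\phi(x) + D(x) \log f_0) \cdot \nu \big|_{\partial \Omega},
\end{equation*}
which vanishes by the compatibility condition assumed on $f_0$ in the corollary (this is precisely the natural translation of the boundary condition in \eqref{eq:2-0-4} evaluated at $t = 0$, after dividing through by the strictly positive factor $\frac{b^2}{2D} f_0$).

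Having verified the hypotheses, Theorem \ref{thm:2-0-1} yields a time $T > 0$ and a classical solution $\rho \in C^{2+\alpha,1+\alpha/2}(\Omega \times [0,T))$ of \eqref{eq:2-0-10} with $\rho > 0$. I would then define $f(x,t) := \rho(x,t) f^{\mathrm{eq}}(x)$. Since $f^{\mathrm{eq}}$ is time-independent and belongs to $C^{2+\alpha}(\Omega)$, the product satisfies $f \in C^{2+\alpha,1+\alpha/2}(\Omega \times [0,T))$ and remains strictly positive. Reversing the algebraic manipulations between \eqref{eq:2-0-4} and \eqref{eq:2-0-10} already carried out in Section \ref{sec:2} shows that $f$ satisfies the PDE and natural boundary condition in \eqref{eq:2-0-4}, while the initial condition follows from $\rho(\cdot, 0) = \rho_0 = f_0/f^{\mathrm{eq}}$. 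The whole argument is essentially bookkeeping: all genuine analytic content is already contained in Theorem \ref{thm:2-0-1}, and there is no serious obstacle beyond carefully tracking how the substitution $\rho = f/f^{\mathrm{eq}}$ transfers regularity and the compatibility condition between $f_0$ and $\rho_0$. One minor point worth noting is that $\rho_0$ need not itself be a probability density — its mass is not generally $1$ — but neither the statement nor the proof of Theorem \ref{thm:2-0-1} uses this normalization, so no difficulty arises.
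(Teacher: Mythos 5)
Your proof is correct and follows exactly the route the paper intends: the corollary is obtained from Theorem \ref{thm:2-0-1} by the substitution $\rho=f/f^{\mathrm{eq}}$ set up in \eqref{eq:2-0-11} together with the equivalence of \eqref{eq:2-0-4} and \eqref{eq:2-0-10} already established in Section \ref{sec:2}, and your verification of the hypotheses (positivity and $C^{2+\alpha}$ regularity of $\rho_0$, the transfer of the compatibility condition, and the observation that the normalization of $\rho_0$ is never used) is precisely the required bookkeeping. The only point to flag is that you tacitly read the corollary's compatibility condition as $\nabla(\phi(x)+D(x)\log f_0)\cdot\nu|_{\partial\Omega}=0$, which is the form actually forced by the boundary condition in \eqref{eq:2-0-4} at $t=0$ and the one equivalent to \eqref{eq:2-0-3} under your identity for $D\log\rho_0$; the expression $\log(D(x)f_0)$ printed in the statement appears to be a typo for $D(x)\log f_0$, and your argument proves the corollary with the intended condition.
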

Before we proceed with a proof of the Theorem \ref{thm:2-0-1}, and
hence Corollary \ref{cor:main_th}, we give
a brief overview of the main ideas of the proof:
\begin{enumerate}[1.]
 \item In Section \ref{sec:2-1}, we consider the change of variables $h$
       in \eqref{eq:2-1-3} and $\xi$ in \eqref{eq:2-1-2}. We will derive
       evolution equations in terms of $h$ and $\xi$ in Lemma
       \ref{lem:2-1-2} and Lemma \ref{lem:2-1-1}. Note that,  $\xi$ vanishes
       at $t=0$, namely, we have, $\xi(x,0)=0$.
 \item In Section \ref{sec:2-2}, we give the decay properties of the
       H\"older norms  $\|\nabla\xi\|_{C^{\alpha.\alpha/2}(\Omega)\times[0,T)}$ and
       $\|\xi\|_{C^{\alpha.\alpha/2}(\Omega)\times[0,T)}$ in terms of
       $\xi$, see \eqref{eq:2-2-2} and \eqref{eq:2-2-3}. Thanks to the condition that $\xi(x,0)=0$, we can obtain
       explicit decay of $\|\nabla
       \xi\|_{C^{\alpha.\alpha/2}(\Omega)\times[0,T)}$ and
       $\|\xi\|_{C^{\alpha.\alpha/2}(\Omega)\times[0,T)}$.
 \item In Section \ref{sec:2-3}, we study a linear parabolic equation
       \eqref{eq:2-3-8} associated with the nonlinear problem
       \eqref{eq:2-1-4}. We show that for the appropriate choice of constants $M,T>0$
       and for $\psi\in X_{M,T}$, where $X_{M,T}$ is defined in
       \eqref{eq:2-3-9}, a solution $\xi$ of \eqref{eq:2-3-8} 
       belongs to $X_{M,T}$, see Lemma \ref{lem:2-3-1}. Thus,  we can
       define a solution map $A:\psi\mapsto\xi$ on $X_{M,T}$.
 \item In Section \ref{sec:2-4}, we show that the solution map has the
       contraction property, see Lemma \ref{lem:2-4-1}. In order to show
       that the Lipschitz constant is less than $1$, we use the decay
       properties of the H\"older norms \eqref{eq:2-2-2},
       \eqref{eq:2-2-3}.
\item Since the solution map is a contraction mapping on $X_{M,T}$, there is a
fixed point $\xi\in X_{M,T}$. The fixed point is a classical solution of
\eqref{eq:2-1-4},  hence we can find a classical solution of
\eqref{eq:2-0-10}. Once we find a solution $\rho$ of \eqref{eq:2-0-10}, by the definition of
the scaled function \eqref{eq:2-0-11}, we obtain a solution of
\eqref{eq:2-0-4}. Note, that in Section \ref{sec:3}, we show
uniqueness of a local solution of the problem \eqref{eq:2-0-10}, and
hence of a local solution of the problem \eqref{eq:2-0-4}.
\end{enumerate}
\subsection{Change of variables}
\label{sec:2-1}
The problem \eqref{eq:2-0-10} is well defined only when $\rho>0$.
However,  it is difficult to prove the positivity of $\rho$ using
\eqref{eq:2-0-10} directly due to lack of maximum principle for 
the nonlinear models. Instead, we will construct a solution $\rho$ of
\eqref{eq:2-0-10}, and will guarantee the positivity of $\rho$, by
introducing a new auxiliary variable $h$ as follows,
\begin{equation}
 \label{eq:2-1-3}
  h(x,t)=D(x)\log\rho(x,t),\quad
  \text{or}\quad
  \rho(x,t)
  =
  \exp
  \left(
   \frac{h(x,t)}{D(x)}
  \right).
\end{equation}
Once we find a solution $h$, then we can obtain a solution
$\rho$ of \eqref{eq:2-0-10} using the change of variables as in
\eqref{eq:2-1-3}. Furthermore, we will
show uniqueness of a local solution $\rho$ in Section \ref{sec:3}.

\par Let us derive the evolution equation in terms of the new variable $h$ in
\eqref{eq:2-1-3}.


\begin{lemma}
 \label{lem:2-1-2} 
 Let $\rho$ be a classical solution of \eqref{eq:2-0-10} and define $h$
 as in \eqref{eq:2-1-3}. Then,  the auxiliary variable $h$ satisfies the
 following equation in a classical sense,
\begin{equation}
  \label{eq:2-1-5}
   \left\{
    \begin{aligned}
     \frac{f^{\mathrm{eq}}(x)}{D(x)}
     \frac{\partial h}{\partial t}
     &=
    \nabla\cdot
     \left(
     \frac{(b(x,t))^2}{2D(x)}
     f^{\mathrm{eq}}(x)
     \nabla h
     \right)
     +
     \frac{(b(x,t))^2}{2D(x)}
     f^{\mathrm{eq}}(x)
     \nabla h
     \cdot
     \nabla
     \left(
     \frac{h}{D(x)}
     \right)
     ,\quad
     x\in\Omega,\
     t>0, \\
     \nabla
     h
     \cdot
     \nu
     \bigg|_{\partial\Omega}
     &=
     0,\quad
     t>0, \\
     h(0,x)
     &=
     h_0(x)
     =
     D(x)\log \rho_0(x),
     \quad
     x\in\Omega.
    \end{aligned}
   \right.
 \end{equation}

 Conversely, let $h\in C^{2,1}(\Omega\times(0,T))\cap
 C^{1,0}(\overline{\Omega}\times[0,T))$ be a solution of
 \eqref{eq:2-1-5} in a classical sense and define $\rho$ as
 \eqref{eq:2-1-3}. Then, $\rho$ is a classical solution of
 \eqref{eq:2-0-10}.
\end{lemma}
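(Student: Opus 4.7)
The plan is to verify the lemma by direct substitution in both directions, using the chain rule and the positivity hypotheses to pass between $\rho$ and $h$. The key observation is that once we set $h = D(x)\log\rho$, two simplifications occur simultaneously: the argument of the gradient in \eqref{eq:2-0-10} becomes simply $\nabla h$, and the time derivative transforms linearly as $\partial_t\rho = (\rho/D)\partial_t h$. So the whole exercise is an application of the product rule.

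First I would handle the forward direction. Starting from \eqref{eq:2-0-10} and using $\nabla(D(x)\log\rho) = \nabla h$ together with $\partial_t\rho = (\rho/D)\partial_t h$, I would rewrite the PDE as
\begin{equation*}
\frac{f^{\mathrm{eq}}(x)\rho}{D(x)}\frac{\partial h}{\partial t}
=
\nabla\cdot\left(\frac{(b(x,t))^2}{2D(x)}f^{\mathrm{eq}}(x)\rho\,\nabla h\right).
\end{equation*}
Because $\rho > 0$ by the definition of a classical solution, I may divide both sides by $\rho$. Expanding the divergence by the product rule,
\begin{equation*}
\frac{1}{\rho}\nabla\cdot\left(\frac{(b(x,t))^2}{2D(x)}f^{\mathrm{eq}}(x)\rho\,\nabla h\right)
=
\nabla\cdot\left(\frac{(b(x,t))^2}{2D(x)}f^{\mathrm{eq}}(x)\nabla h\right)
+\frac{(b(x,t))^2}{2D(x)}f^{\mathrm{eq}}(x)\nabla h\cdot\frac{\nabla\rho}{\rho},
\end{equation*}
and recognizing $\nabla\rho/\rho = \nabla\log\rho = \nabla(h/D(x))$ produces exactly the interior equation in \eqref{eq:2-1-5}. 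For the boundary condition, the natural boundary condition in \eqref{eq:2-0-10} contains the common positive factor $\tfrac{(b(x,t))^2}{2D(x)}f^{\mathrm{eq}}(x)\rho$, which can be divided out by \eqref{eq:2-0-1} and the positivity of $\rho$ and $f^{\mathrm{eq}}$, reducing it to $\nabla h\cdot\nu = 0$ on $\partial\Omega$. The initial condition is immediate from \eqref{eq:2-1-3}.

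For the converse, assume $h\in C^{2,1}(\Omega\times(0,T))\cap C^{1,0}(\overline{\Omega}\times[0,T))$ solves \eqref{eq:2-1-5} and set $\rho=\exp(h/D(x))$. Since $D(x)>0$ by \eqref{eq:2-0-1}, this $\rho$ is automatically positive and has the same spatial/temporal regularity as $h$. The same chain-rule identities $\partial_t\rho=(\rho/D)\partial_t h$ and $\nabla(D(x)\log\rho)=\nabla h$ now run in reverse, and multiplying the interior equation in \eqref{eq:2-1-5} by $\rho$ contracts the two right-hand terms back into a single divergence, yielding the interior equation in \eqref{eq:2-0-10}. The boundary condition and the initial datum $\rho_0=\exp(h_0/D)$ follow by construction.

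I do not foresee a serious obstacle here, since the lemma is structurally a change of variables rather than an analytic estimate. The only subtle points are bookkeeping: ensuring that $\rho>0$ is actually available wherever we need to divide by $\rho$ (guaranteed by the definition of a classical solution in the forward direction, and automatic from $\rho=\exp(h/D)$ in the converse), and checking that the common factor $\tfrac{(b(x,t))^2}{2D(x)}f^{\mathrm{eq}}(x)\rho$ in the boundary flux is strictly positive so that it can be cancelled. Both are handled by \eqref{eq:2-0-1} and the positivity of $f^{\mathrm{eq}}$ given by the explicit formula \eqref{eq:2-0-9}.
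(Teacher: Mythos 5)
Your proposal is correct and follows essentially the same route as the paper: both rely on the identities $\partial_t\rho=(\rho/D)\partial_t h$, $\nabla(D\log\rho)=\nabla h$, and the product-rule expansion of the divergence (the paper factors $e^{h/D}$ out of the flux, which is the same as your dividing by $\rho$), and both use the positivity of $b$, $D$, $f^{\mathrm{eq}}$, and $\rho=e^{h/D}$ to equate the boundary conditions. No gaps.
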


\begin{proof}
 By straightforward calculation of the derivative of $\rho$ using
 \eqref{eq:2-1-3}, we have that $\rho_t=\frac{e^{h/D(x)}}{D(x)} h_t$,
 as well as,
 \begin{equation*}
  \frac{(b(x,t))^2}{2D(x)}
   f^{\mathrm{eq}}(x)\rho
   \nabla
   \left(
   D(x)\log\rho
   \right)
   =
   \frac{(b(x,t))^2}{2D(x)}
   f^{\mathrm{eq}}(x)
   e^{h/D(x)}
   \nabla
   h,
 \end{equation*}
and,
 \begin{equation*}
  \begin{split}
   &\quad
   \nabla\cdot
   \left(
   \frac{(b(x,t))^2}{2D(x)}
   f^{\mathrm{eq}}(x)\rho
   \nabla
   \left(
   D(x)\log\rho
   \right)
   \right) \\
   &=
   e^{h/D(x)}
   \nabla\cdot
   \left(
   \frac{(b(x,t))^2}{2D(x)}
   f^{\mathrm{eq}}(x)
   \nabla
   h
   \right)
   +
   \frac{(b(x,t))^2}{2D(x)}
   f^{\mathrm{eq}}(x)
   e^{h/D(x)}
   \nabla
   h
   \cdot
   \nabla
   \left(
   \frac{h}{D(x)}
   \right).
  \end{split}
 \end{equation*}
 Note that $b$, $D$, $f^{\mathrm{eq}}$, and $e^{h/D}$ are positive
 functions, hence the boundary condition of the model \eqref{eq:2-0-10} is
 equivalent to the Neumann boundary condition for the function $h$. Using these
 relations, we obtain result of Lemma \ref{lem:2-1-2}.
\end{proof}

\begin{remark}
Note, employing  the change of the variable for $\rho$ in terms of $h$ \eqref{eq:2-1-3}, the free
energy $F[f]$ \eqref{rhofe} and
 the dissipation law \eqref{rhodfe} are transformed into,
 \begin{equation}
\label{hfe}
  F[f]
  =
  \int_{\Omega}
  \left(
   h(x,t)-D(x)+\Cr{const:2.9}
  \right)
  \exp
  \left(
   \frac{h(x,t)}{D(x)}
  \right)
  f^{\mathrm{eq}}(x)   
  \,dx,
 \end{equation}
 and,
 \begin{equation}
\label{hdfe}
  \frac{d}{dt}F[f]
   =
   -
   \int_\Omega
   \frac{(b(x,t))^2}{2D(x)}
   \left|
    \nabla
    h(x,t)
   \right|^2
   \exp
   \left(
    \frac{h(x,t)}{D(x)}
   \right)
   f^{\mathrm{eq}}(x)   
   \,dx.
 \end{equation}
\end{remark}
\begin{remark}
  The non-linearity of the problem \eqref{eq:2-1-5} is the so-called \emph{scale critical}. The
 diffusion term $\Delta h$ and the nonlinear term $|\nabla h|^2$ have
 the same scale. To see this, for $\gamma>0$ we consider the following
 equation,
 \begin{equation}
  \label{eq:2-1-7}
 \frac{\partial u}{\partial t}(x,t)
   =
   \Delta u(x,t)
   +
   |\nabla u(x,t)|^\gamma,
   \quad
   x\in\R^d,\quad
   t>0.
 \end{equation}
 For a positive scaling parameter $\lambda>0$ and
 $(x_0,t_0)\in\R^d\times(0,\infty)$, let us consider the change of
 variables $x-x_0=\lambda y$, $t-t_0=\lambda^2 s$, and a scale transformation
 $v(y,s)=u(x,t)$. Then,
 \begin{equation*}
  \frac{\partial u}{\partial t}(x,t)=
   \frac{1}{\lambda^2}\frac{\partial v}{\partial s}(y,s),\quad
   \Delta_x u(x,t)=\frac{1}{\lambda^2}\Delta_y v(y,s),\quad
   |\nabla_x u(x,t)|^\gamma=\frac{1}{\lambda^\gamma} |\nabla_y v(y,s)|^\gamma,
 \end{equation*}
 hence the scale transformation $v$ satisfies,
 \begin{equation*}
  \frac{\partial v}{\partial s}(y,s)
   =
   \Delta_y v(y,s)
   +
   \lambda^{2-\gamma}
   |\nabla v(y,s)|^\gamma,
   \quad
   y\in\R^d,\quad
   0<s<t_0.
 \end{equation*}
 When we take $\lambda\downarrow0$, the function $u(x,t)$ will blow-up
 at $x=x_0$, and is regarded as a perturbation of a linear function
 around $x=x_0$. If $\gamma<2$, which is called \emph{scale
 sub-critical}, then $\lambda^{2-\gamma}\rightarrow0$ as
 $\gamma\downarrow0$.  Hence, the non-linearity $|\nabla u(x,t)|^\gamma$
 can be regarded as a small perturbation in terms of the diffusion term
 $\Delta u(x,t)$. If $\gamma>2$, which is called \emph{scale
 super-critical}, then $\lambda^{\gamma-2}\rightarrow0$ as
 $\gamma\downarrow0$. In this case, the non-linear term $|\nabla
 u(x,t)|^\gamma$ becomes a principal term. Thus the behavior of $u$ may
 be different from solutions of the linear problem, namely, the
 solutions of the heat equation. If $\gamma=2$, which is called
 \emph{scale critical case}, then $\lambda^{2-\gamma}=1$ (like in our
 model \eqref{eq:2-1-5}). The diffusion term $\Delta u(x,t)$ and the
 nonlinear term $|\nabla u(x,t)|^2$ are balanced, hence the
 non-linearity $|\nabla u(x,t)|^2$ cannot be regarded as the small
 perturbation anymore, especially for the study of the global existence
 and long-time asymptotic behavior. Thus, in the problem
 \eqref{eq:2-1-5}, we need to consider the interaction between the
 diffusion term and the nonlinear term accurately. For the importance of
 the scale transformation, see for instance \cite{MR2656972,
   MR0784476}. The scale critical case for \eqref{eq:2-1-7} is related to the heat flow for harmonic maps. See for instance,
 \cite{MR1324408,MR1266472,MR0990191,MR2155901}. See also 
 \cite{MR0164306,MR0664498} for the steady-state case.
\end{remark}

 Our goal is to use the Schauder estimates for linear parabolic
equations, therefore
we rewrite \eqref{eq:2-1-5} in the non-divergence form,
\begin{equation*}
   \frac{\partial h}{\partial t}
   =
   \frac{(b(x,t))^2}{2}
   \Delta h
   +
   \frac{D(x)}{f^{\mathrm{eq}}(x)}
   \nabla
   \left(
   \frac{(b(x,t))^2}{2D(x)}
   f^{\mathrm{eq}}(x)
   \right)
   \cdot
   \nabla h
   +
   \frac{(b(x,t))^2}{2D(x)}
   |\nabla h|^2
   -
   \frac{(b(x,t))^2}{2(D(x))^2}
   h
   \nabla h
   \cdot
   \nabla
   D(x).
\end{equation*}

Next, we introduce a new variable $\xi$ as,
\begin{equation}
 \label{eq:2-1-2}
 h(x,t)=h_0(x)+\xi(x,t),
\end{equation}
in order to change problem \eqref{eq:2-1-5} into the zero initial value
problem with $\xi(x,0)=0$. Note that, when $h$ is sufficiently close to the initial
data $h_0$ for small $t>0$ in the H\"older space, $\xi$ should be also
small enough for small $t>0$. To show the smallness of the
nonlinearity in the H\"older space, we consider the nonlinear terms in
terms of $\xi$ instead of $h$. Thus, below, we will derive the evolution equation in
terms of $\xi$.

\begin{lemma}
 \label{lem:2-1-1}
 %
 Let $h\in C^{2,1}(\Omega\times(0,T))\cap
 C^{1,0}(\overline{\Omega}\times[0,T))$ be a solution of
 \eqref{eq:2-1-5} in a classical sense and define $\xi$ as
 in \eqref{eq:2-1-2}. Then, $\xi$ satisfies the following equation in a
 classical sense,
 \begin{equation}
  \label{eq:2-1-4}
  \left\{
   \begin{aligned}
    \frac{\partial \xi}{\partial t}
    &=
    L\xi+g_0(x,t)+G(\xi),\quad
    x\in\Omega,\
    t>0, \\
    \nabla
    \xi
    \cdot
    \nu
    \bigg|_{\partial\Omega}
    &=
    0,\quad
    t>0, \\
    \xi(0,x)
    &=
    0,
    \quad
    x\in\Omega,
   \end{aligned}
  \right.
 \end{equation}
 where
 \begin{equation}
  \label{eq:2-1-1}
   \begin{split}
    L\xi
    &:=
    \frac{(b(x,t))^2}{2}
    \Delta \xi \\
    &\quad
    +
    \left(
    \frac{D(x)}{f^{\mathrm{eq}}(x)}
    \nabla
    \left(
    \frac{(b(x,t))^2}{2D(x)}
    f^{\mathrm{eq}}(x)
    \right)
    +
    \frac{(b(x,t))^2}{D(x)}\nabla h_0(x)
    -
    \frac{(b(x,t))^2h_0(x)}{2(D(x))^2}\nabla D(x)
    \right)
    \cdot
    \nabla \xi
    \\
    &\quad
    -
    \left(
    \frac{(b(x,t))^2}{2(D(x))^2}\nabla D(x)\cdot\nabla h_0(x)
    \right)
    \xi, \\
    g_0(x,t)
    &:=
    \frac{(b(x,t))^2}{2}
    \Delta h_0(x)
    +
    \frac{D(x)}{f^{\mathrm{eq}}(x)}
    \nabla
    \left(
    \frac{(b(x,t))^2}{2D(x)}
    f^{\mathrm{eq}}(x)
    \right)
    \cdot
    \nabla h_0(x) \\
    &\quad
    +
    \frac{(b(x,t))^2}{2D(x)}
    |\nabla h_0(x)|^2
    -
    \frac{(b(x,t))^2}{2(D(x))^2}
    h_0(x)
    \nabla h_0(x)
    \cdot
    \nabla
    D(x), \\
    G(\xi)
    &:=
    \frac{(b(x,t))^2}{2D(x)}
    f^{\mathrm{eq}}(x)
    |\nabla \xi|^2
    -
    \frac{(b(x,t))^2}{2(D(x))^2}
    f^{\mathrm{eq}}(x)
    \xi
    \nabla \xi
    \cdot
    \nabla
    D(x).
   \end{split}
 \end{equation}
 
 Conversely, let $\xi\in C^{2,1}(\Omega\times(0,T))\cap
 C^{1,0}(\overline{\Omega}\times[0,T))$ be a solution of
 \eqref{eq:2-1-4} in a classical sense and define $h$ as in
 \eqref{eq:2-1-2}. Then, $h$ is a solution of \eqref{eq:2-1-5} in a
 classical sense.
\end{lemma}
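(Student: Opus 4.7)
\smallskip

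\noindent\textbf{Proof proposal.} The plan is to prove the forward direction by direct substitution of $h=h_0(x)+\xi(x,t)$ into the non-divergence form of \eqref{eq:2-1-5} that has already been written out just before Lemma \ref{lem:2-1-1}, namely
\begin{equation*}
\frac{\partial h}{\partial t}
= \frac{(b(x,t))^2}{2}\Delta h
 + \frac{D(x)}{f^{\mathrm{eq}}(x)}\nabla\!\left(\frac{(b(x,t))^2}{2D(x)}f^{\mathrm{eq}}(x)\right)\cdot\nabla h
 + \frac{(b(x,t))^2}{2D(x)}|\nabla h|^2
 - \frac{(b(x,t))^2}{2(D(x))^2}\,h\,\nabla h\cdot\nabla D(x),
\end{equation*}
then collect terms according to their order of homogeneity in $\xi$. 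Since $h_0$ is independent of $t$, we have $\partial_t h=\partial_t\xi$; I then expand $\Delta h=\Delta h_0+\Delta\xi$, $\nabla h=\nabla h_0+\nabla\xi$, $|\nabla h|^2=|\nabla h_0|^2+2\nabla h_0\cdot\nabla\xi+|\nabla\xi|^2$, and $h\,\nabla h=(h_0+\xi)(\nabla h_0+\nabla\xi)$. The terms of order $0$ in $\xi$ reproduce $g_0(x,t)$; the terms of order $1$ in $\xi$ reproduce $L\xi$, where the coefficient of $\nabla\xi$ combines the drift from the linear part with $\frac{(b)^2}{D}\nabla h_0$ (coming from $2\nabla h_0\cdot\nabla\xi$) and $-\frac{(b)^2 h_0}{2D^2}\nabla D$ (coming from $h_0\nabla\xi\cdot\nabla D$), while the coefficient of $\xi$ is $-\frac{(b)^2}{2D^2}\nabla D\cdot\nabla h_0$ (coming from $\xi\nabla h_0\cdot\nabla D$); and the terms of order $2$ in $\xi$ reproduce $G(\xi)$.

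For the boundary condition, I would note that the compatibility hypothesis \eqref{eq:2-0-3} states $\nabla(D(x)\log\rho_0)\cdot\nu=0$ on $\partial\Omega$, i.e.\ $\nabla h_0\cdot\nu|_{\partial\Omega}=0$. Hence $\nabla\xi\cdot\nu=\nabla h\cdot\nu-\nabla h_0\cdot\nu=0$ on $\partial\Omega$, using the Neumann condition for $h$ in \eqref{eq:2-1-5}. The initial condition $\xi(x,0)=h(x,0)-h_0(x)=h_0(x)-h_0(x)=0$ is immediate from \eqref{eq:2-1-2} and the initial data in \eqref{eq:2-1-5}. All manipulations are valid pointwise because $h\in C^{2,1}\cap C^{1,0}$ and $h_0\in C^{2+\alpha}(\Omega)$ by \eqref{eq:2-0-2}, so $\xi$ has the same regularity as $h$.

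The converse direction is obtained by running the same substitution in reverse: given a classical solution $\xi$ of \eqref{eq:2-1-4}, define $h:=h_0+\xi$, observe that $h$ inherits the regularity $C^{2,1}(\Omega\times(0,T))\cap C^{1,0}(\overline\Omega\times[0,T))$ from $\xi$ (since $h_0\in C^{2+\alpha}(\Omega)$), and verify that the grouping of terms into $L\xi+g_0+G(\xi)$ recombines into the non-divergence form of \eqref{eq:2-1-5}; the Neumann condition and the initial condition $h(x,0)=h_0(x)$ follow identically.

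There is no real conceptual obstacle here: the statement is essentially an algebraic identity obtained by Taylor-expanding the (at most quadratic) nonlinearities around $h_0$. The only care required is bookkeeping, in particular making sure that the cross terms from $|\nabla h|^2$ and from $h\,\nabla h\cdot\nabla D$ are distributed correctly between the first-order coefficient of $\nabla\xi$ in $L\xi$ and the zeroth-order coefficient of $\xi$ in $L\xi$; a secondary point is to check that the compatibility condition \eqref{eq:2-0-3} is exactly what is needed to pass the Neumann boundary condition from $h$ to $\xi$ (and back).
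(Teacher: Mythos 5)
Your proposal is correct and follows essentially the same route as the paper's proof: substitute $h=h_0+\xi$ into the non-divergence form of \eqref{eq:2-1-5}, use $\partial_t h=\partial_t\xi$, $\nabla h=\nabla h_0+\nabla\xi$, $\Delta h=\Delta h_0+\Delta\xi$, expand the two quadratic nonlinearities, collect terms by order in $\xi$ to obtain $g_0$, $L\xi$, and $G(\xi)$, and invoke the compatibility condition \eqref{eq:2-0-3} to transfer the Neumann condition between $h$ and $\xi$. One minor remark: the expansion yields the order-two terms \emph{without} the factor $f^{\mathrm{eq}}(x)$ that appears in the printed definition of $G(\xi)$ in \eqref{eq:2-1-1}; that factor looks like a typo in the paper (it is absent from the intermediate display \eqref{eq:2-1-6} of the paper's own proof), so your bookkeeping is the algebraically consistent one.
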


\begin{proof}
The equivalence of the initial conditions for functions $h$ and $\xi$ is trivial, so we consider the
 equivalence of the differential equations and of the boundary conditions
 for $h$ and $\xi$.
 First, we derive the differential equation for $\xi$ using the change
 of variable in \eqref{eq:2-1-2}.  Assume $h$ is a
 solution of \eqref{eq:2-1-5} in a classical sense. Since $\xi_t=h_t$,
 $\nabla h=\nabla h_0+\nabla\xi$, $\Delta h=\Delta h_0+\Delta\xi$, we
 have,
 \begin{equation}
  \label{eq:2-1-6}
   \begin{split}
    \frac{\partial \xi}{\partial t}
    &=
    \frac{(b(x,t))^2}{2}
    \Delta \xi
    +
    \frac{D(x)}{f^{\mathrm{eq}}(x)}
    \nabla
    \left(
    \frac{(b(x,t))^2}{2D(x)}
    f^{\mathrm{eq}}(x)
    \right)
    \cdot
    \nabla \xi
    \\
    &\quad
    +\frac{(b(x,t))^2}{2}
    \Delta h_0(x)
    +
    \frac{D(x)}{f^{\mathrm{eq}}(x)}
    \nabla
    \left(
    \frac{(b(x,t))^2}{2D(x)}
    f^{\mathrm{eq}}(x)
    \right)
    \cdot
    \nabla h_0(x)
    \\
    &\quad
    +
    \frac{(b(x,t))^2}{2D(x)}
    |\nabla \xi+\nabla h_0(x)|^2
    -
    \frac{(b(x,t))^2}{2(D(x))^2}
    (\xi+h_0(x))
    \nabla (\xi+h_0(x))
    \cdot
    \nabla
    D(x).
   \end{split}
 \end{equation}
 Using the following relations,
 \begin{equation*}
  \begin{split}
   |\nabla \xi+h_0(x)|^2
   &=
   |\nabla \xi|^2
   +
   2\nabla h_0(x)\cdot\nabla \xi
   +
   |\nabla h_0(x)|^2, \\
   (\xi+h_0(x))
   \nabla (\xi+h_0(x))
   &=
   \xi\nabla\xi
   +
   \xi\nabla h_0(x)
   +
   h_0(x) \nabla \xi
   +
   h_0(x)\nabla h_0(x),
  \end{split}
 \end{equation*}
 the equation \eqref{eq:2-1-6} is transformed into the equation,
 \begin{equation*}
   \begin{split}
    \frac{\partial \xi}{\partial t}
    &=
    \frac{(b(x,t))^2}{2}
    \Delta \xi
    +
    \left(
    \frac{D(x)}{f^{\mathrm{eq}}(x)}
    \nabla
    \left(
    \frac{(b(x,t))^2}{2D(x)}
    f^{\mathrm{eq}}(x)
    \right)
    +
    \frac{(b(x,t))^2}{D(x)}\nabla h_0(x)
    -
    \frac{(b(x,t))^2h_0(x)}{2(D(x))^2}\nabla D(x)
    \right)
    \cdot
    \nabla \xi
    \\
    &\quad
    -
    \left(
    \frac{(b(x,t))^2}{2(D(x))^2}\nabla D(x)\cdot\nabla h_0(x)
    \right)
    \xi \\
    &\quad
    +\frac{(b(x,t))^2}{2}
    \Delta h_0(x)
    +
    \frac{D(x)}{f^{\mathrm{eq}}(x)}
    \nabla
    \left(
    \frac{(b(x,t))^2}{2D(x)}
    f^{\mathrm{eq}}(x)
    \right)
    \cdot
    \nabla h_0(x) \\
    &\quad
    +
    \frac{(b(x,t))^2}{2D(x)}
    |\nabla h_0(x)|^2
    -
    \frac{(b(x,t))^2}{2(D(x))^2}
    h_0(x)
    \nabla h_0(x)
    \cdot
    \nabla
    D(x) \\
    &\quad
    +
    \frac{(b(x,t))^2}{2D(x)}
    f^{\mathrm{eq}}(x)
    |\nabla \xi|^2
    -
    \frac{(b(x,t))^2}{2(D(x))^2}
    f^{\mathrm{eq}}(x)
    \xi
    \nabla \xi
    \cdot
    \nabla
    D(x) \\
    &=
    L\xi+g_0(x,t)+G(\xi).
   \end{split}
 \end{equation*}
 Thus, we obtain the equivalence of the differential equations for $h$
 and $\xi$.

 Next, we consider boundary condition
 $\nabla\xi\cdot\nu|_{\partial\Omega}=0$. Using the compatibility
 condition \eqref{eq:2-0-3}, we have,
 \begin{equation*}
  \nabla \xi\cdot\nu \bigg|_{\partial\Omega}
   =
   \nabla h\cdot\nu \bigg|_{\partial\Omega}
   -
   \nabla h_0\cdot\nu \bigg|_{\partial\Omega}
   =
   \nabla h\cdot\nu \bigg|_{\partial\Omega},
 \end{equation*}
 hence we also have the equivalence of the boundary conditions for $h$
 and $\xi$.
\end{proof}

\begin{remark}
 %
 From the change of variable \eqref{eq:2-1-2}, the free energy $F[f]$ \eqref{hfe}
 and the energy dissipation law \eqref{hdfe} are given in terms of $\xi$ below,
 \begin{equation}
  F[f]
  =
  \int_{\Omega}
  \left(
   \xi(x,t)+h_0(x)-D(x)+\Cr{const:2.9}
  \right)
  \exp
  \left(
   \frac{\xi(x,t)+h_0(x)}{D(x)}
  \right)
  f^{\mathrm{eq}}(x)   
  \,dx,
 \end{equation}
 and
 \begin{equation}
  \frac{d}{dt}F[f]
   =
   -
   \int_\Omega
   \frac{(b(x,t))^2}{2D(x)}
   \left|
    \nabla
    \xi(x,t)
    +
    \nabla h_0(x)
   \right|^2
   \exp
   \left(
    \frac{\xi(x,t)+h_0(x)}{D(x)}
   \right)
   f^{\mathrm{eq}}(x)   
   \,dx.
 \end{equation}
\end{remark}

\begin{remark}
 %
 The idea to consider the variable $\xi$ in \eqref{eq:2-1-2}, in order
 to change \eqref{eq:2-1-5} into the zero initial value problem
 \eqref{eq:2-1-4}, is similar to the study of the inhomogeneous Dirichlet
 boundary value problems for the elliptic equations, see \cite[Theorem 6.8, Theorem
 8.3]{MR1814364}.
\end{remark}

In this section, we made several changes of variables. Hereafter we study
\eqref{eq:2-1-4} with the homogeneous Neumann boundary condition and
with the zero
initial condition. As one can observe in \eqref{eq:2-1-1}, the initial
data $h_0$ (or equivalently $\rho_0$) is included into the coefficients of the linear operator $L$
and of 
the external force $g_0$ of the problem \eqref{eq:2-1-4}.

\subsection{Properties of the H\"older spaces with the zero initial condition}
\label{sec:2-2}

In this section, we study properties of the H\"older spaces with the
zero initial value condition. The main idea behind the proof of the Theorem
\ref{thm:2-0-1} is to find a solution of the problem \eqref{eq:2-1-4} in a function
space as defined below,
\begin{equation}
 \label{eq:2-3-9}
 X_{M,T}
  :=
  \left\{
   \zeta\in C^{2+\alpha,1+\alpha/2}(\Omega\times[0,T))
   :
   \zeta(x,0)=0\ \text{for}\ x\in\Omega,\ 
   \nabla\cdot\zeta\big|_{\partial\Omega}=0,\ 
   \|\zeta\|_{C^{2+\alpha,1+\alpha/2}(\Omega\times[0,T))}\leq M
   \right\}
\end{equation}
for the appropriate  choice of constants $M, T>0$.  

For $\psi\in X_{M,T}$, let $\eta$ be a classical solution of the
following linear parabolic problem,
\begin{equation}
 \label{eq:2-3-8}
 \left\{
  \begin{aligned}
   \frac{\partial \eta}{\partial t}
   &=
   L\eta+g_0(x,t)+G(\psi),\quad
   x\in\Omega,\
   t>0, \\
   \nabla
   \eta
   \cdot
   \nu
   \bigg|_{\partial\Omega}
   &=
   0,\quad
   t>0, \\
   \eta(0,x)
   &=
   0,
   \quad
   x\in\Omega,
  \end{aligned}
 \right.
\end{equation}
where $L$, $g_0(x,t)$ and $G$ are defined in \eqref{eq:2-1-1}. Note
that,  in
Section~\ref{sec:2-3} our goal will be to select constants $M,T>0$ such that for any $\psi\in X_{M,T}$, a
solution $\eta$ belongs to $X_{M,T}$. Thus, here we first need to
introduce the idea of the solution map
and the well-definedness of the solution map on $X_{M,T}$.

\begin{definition}\label{def:solmap}
 For $\psi\in X_{M,T}$, let $\eta=A\psi$ be a solution of
 \eqref{eq:2-3-8}. We call $A$ a solution map for
 \eqref{eq:2-3-8}. The solution map $A$ is\emph{ well-defined} on $X_{M,T}$ if
 $A\psi\in X_{M,T}$ for all $\psi\in X_{M,T}$.
\end{definition}

Once we will show that the solution map $A$ is well-defined in $X_{M,T}$ and
is a contraction for the appropriate choices of constants,  then we can
find a fixed point $\xi\in X_{M,T}$ for the solution map $A$, and thus
establish that $\xi$ is a classical solution of the problem
\eqref{eq:2-1-4}. In order to derive the contraction property of the
solution map $A$, first, we obtain the decay estimates for the H\"older's
norm for $\zeta\in X_{M,T}$.

As we noted in the Remark \ref{rem:2-1-1} below, when a function $\theta\in
C^{\alpha,\alpha/2}(\Omega\times[0,T))$ satisfies $\theta(x,0)=0$, the
supremum norm of $\theta$ and its derivatives will vanish at $t=0$, namely
\begin{equation*}
  \sup_{\Omega\times[0,T)}|\theta|,\
   \sup_{\Omega\times[0,T)}|\nabla \theta|,\
   \sup_{\Omega\times[0,T)}|\nabla^2 \theta|
   \rightarrow0,\quad \text{as}\ T\rightarrow0,
\end{equation*}
as a consequence of the H\"older's norm's estimates \eqref{eq:2-2-2} and
\eqref{eq:2-2-3} obtained below.  Note again that $\theta(x,0)=0$ is
essential for the above convergence. In order to consider the nonlinear
model \eqref{eq:2-1-4} as a perturbation of the linear system \eqref{eq:2-3-8}, we need some smallness
for the norm in general. Hence, we next show explicit decay estimates
for the H\"older's norms which can be applied for a function $\zeta\in X_{M,T}$.

\begin{lemma}
 \label{lem:2-2-1}
 Let any function $\theta\in C^{2+\alpha,1+\alpha/2}(\Omega\times[0,T))$,
 $\theta(x,0)=0$ for $x\in\Omega$.
 Then,
 \begin{equation}
  \label{eq:2-2-2}
  \|\nabla\theta\|_{C^{\alpha,\alpha/2}(\Omega\times[0,T))}
   \leq
   3 (T^{(1+\alpha)/2}+T^{1/2})\|\theta\|_{C^{2+\alpha,1+\alpha/2}(\Omega\times[0,T))}.
 \end{equation}
 Thus, for a function $\zeta\in X_{M,T}$,~\eqref{eq:2-2-2} also holds.
\end{lemma}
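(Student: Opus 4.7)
The plan is to exploit the zero initial condition $\theta(x,0)=0$, which, since it holds pointwise on all of $\Omega$, also gives $\nabla\theta(x,0)=0$ and $\nabla^2\theta(x,0)=0$ for every $x\in\Omega$. The temporal H\"older seminorms $\langle\nabla\theta\rangle_{(1+\alpha)/2}$ and $\langle\nabla^2\theta\rangle_{\alpha/2}$, which are built into $\|\theta\|_{C^{2+\alpha,1+\alpha/2}(\Omega\times[0,T))}$, then convert these zero initial values into sup-norm decay as $T\downarrow 0$. Specifically, for any $(x,t)\in\Omega\times[0,T)$,
\begin{equation*}
|\nabla\theta(x,t)|=|\nabla\theta(x,t)-\nabla\theta(x,0)|\le T^{(1+\alpha)/2}\|\theta\|_{C^{2+\alpha,1+\alpha/2}},
\qquad
|\nabla^2\theta(x,t)|\le T^{\alpha/2}\|\theta\|_{C^{2+\alpha,1+\alpha/2}}.
\end{equation*}
The first of these handles the $\|\nabla\theta\|_{C(\Omega\times[0,T))}$ term in the target norm, contributing exactly the $T^{(1+\alpha)/2}\|\theta\|_{C^{2+\alpha,1+\alpha/2}}$ piece of the bound.

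For the spatial H\"older seminorm $[\nabla\theta]_{\alpha,\Omega\times[0,T)}$, I would split according to whether $|x-x'|\le T^{1/2}$ or $|x-x'|>T^{1/2}$. In the small-separation regime, the mean value theorem gives $|\nabla\theta(x,t)-\nabla\theta(x',t)|\le\|\nabla^2\theta\|_{C}\,|x-x'|$; dividing by $|x-x'|^\alpha$ and plugging in the $\|\nabla^2\theta\|_{C}\le T^{\alpha/2}\|\theta\|_{C^{2+\alpha,1+\alpha/2}}$ bound together with $|x-x'|^{1-\alpha}\le T^{(1-\alpha)/2}$ produces exactly $T^{1/2}\|\theta\|_{C^{2+\alpha,1+\alpha/2}}$. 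In the large-separation regime, the triangle inequality combined with the sup-norm bound on $\nabla\theta$ yields $|\nabla\theta(x,t)-\nabla\theta(x',t)|\le 2T^{(1+\alpha)/2}\|\theta\|_{C^{2+\alpha,1+\alpha/2}}$, and dividing by $|x-x'|^\alpha<T^{-\alpha/2}$ again gives $2T^{1/2}\|\theta\|_{C^{2+\alpha,1+\alpha/2}}$. For the temporal seminorm $\langle\nabla\theta\rangle_{\alpha/2,\Omega\times[0,T)}$, I would simply use the stronger temporal regularity already encoded in the target norm: since $\langle\nabla\theta\rangle_{(1+\alpha)/2}\le\|\theta\|_{C^{2+\alpha,1+\alpha/2}}$, one has
\begin{equation*}
\frac{|\nabla\theta(x,t)-\nabla\theta(x,t')|}{|t-t'|^{\alpha/2}}\le|t-t'|^{1/2}\|\theta\|_{C^{2+\alpha,1+\alpha/2}}\le T^{1/2}\|\theta\|_{C^{2+\alpha,1+\alpha/2}}.
\end{equation*}
Adding the three contributions then yields the claimed bound $3(T^{(1+\alpha)/2}+T^{1/2})\|\theta\|_{C^{2+\alpha,1+\alpha/2}}$.

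The only real subtlety, which I would highlight as the crux, is that one must convert $\theta(x,0)=0$ not only into $\nabla\theta(x,0)=0$ but also into $\nabla^2\theta(x,0)=0$ in order to gain the extra $T^{\alpha/2}$ factor in $\|\nabla^2\theta\|_{C}$; without this improvement, the small-separation case of the spatial H\"older split would not collapse to a clean power of $T^{1/2}$ and would instead leave a term of order $T^{(1-\alpha)/2}$ that fails to balance against the large-separation estimate. Everything else is bookkeeping: a case-split at the parabolic scale $|x-x'|\sim T^{1/2}$, plus a single absorption of $|t-t'|^{1/2}$ into $T^{1/2}$ for the temporal seminorm.
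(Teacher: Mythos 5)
Your proof is correct and follows essentially the same route as the paper: both exploit $\nabla\theta(\cdot,0)=0$ and $\nabla^2\theta(\cdot,0)=0$ together with the temporal seminorms $\langle\nabla\theta\rangle_{(1+\alpha)/2}$ and $\langle\nabla^2\theta\rangle_{\alpha/2}$ to gain powers of $T$, and both split the spatial H\"older seminorm at the parabolic scale $|x-x'|\sim t^{1/2}$. The one point to make explicit is that your mean value theorem step needs the segment from $x$ to $x'$ to lie in $\Omega$, which the paper secures by assuming $\Omega$ convex (and remarks on afterwards); otherwise the argument and the resulting constants coincide.
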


\begin{proof}
 First, we consider $\|\nabla\theta\|_{C(\Omega\times[0,T))}$. For
 $x\in\Omega$ and $t\in(0,T)$, we have, by $\nabla\theta(x,0)=0$ and the
 definition of H\"older's norm, that,
 \begin{equation}
  \label{eq:2-2-4}
   |\nabla\theta(x,t)|
   =
   \frac{|\nabla\theta(x,t)-\nabla\theta(x,0)|}{|t-0|^{(1+\alpha)/2}}
   |t-0|^{(1+\alpha)/2}
   \leq
    t^{(1+\alpha)/2}\langle\nabla \theta\rangle_{(1+\alpha)/2,\Omega\times[0,T)}.
 \end{equation}
 Therefore, we have,
 \begin{equation}
  \label{eq:2-2-5}
  \|\nabla\theta\|_{C(\Omega\times[0,T))}
   \leq
   T^{(1+\alpha)/2}\|\theta\|_{C^{2+\alpha,1+\alpha/2}(\Omega\times[0,T))}.
 \end{equation}

 Next, we derive the estimate of
 $[\nabla\theta]_{\alpha,\Omega\times[0,T)}$. For $x,x'\in\Omega$ and
 $t\in(0,T)$, we first assume that $|x-x'|<t^{1/2}$.  Then,
 since we assume that $\Omega$ is convex, the
 fundamental theorem of calculus and the triangle inequality lead to,
 \begin{equation*}
  \begin{split}
   |\nabla\theta(x,t)-\nabla\theta(x',t)|
   &=
   \left|
   \int_0^1
   \frac{d}{d\tau}
   \nabla\theta(\tau x+(1-\tau)x',t)
   \,d\tau
   \right| \\
   &\leq
    |x-x'|\int_0^1
   |\nabla^2\theta(\tau x+(1-\tau)x',t)|
   \,d\tau. \\
  \end{split}
 \end{equation*}
 Since $\nabla^2\theta(\tau x+(1-\tau)x',0)=0$, we have,
 \begin{equation*}
  \begin{split}
   |\nabla^2\theta(\tau x+(1-\tau)x',t)|
   &\leq
   \frac{|\nabla^2\theta(\tau x+(1-\tau)x',t)-\nabla^2\theta(\tau x+(1-\tau)x',0)|}{|t-0|^{\alpha/2}}|t-0|^{\alpha/2} \\
   &\leq
   T^{\alpha/2}\langle
   \nabla^2\theta
   \rangle_{\alpha/2,\Omega\times[0,T)}.
  \end{split}
 \end{equation*}
 Using the assumption $|x-x'|<t^{1/2}$,  and that $|x-x'|=|x-x'|^{1-\alpha}|x-x'|^{\alpha}$, we conclude,

 \begin{equation}
  \label{eq:2-2-6}
  |\nabla\theta(x,t)-\nabla\theta(x',t)|
   \leq
   T^{\alpha/2}
   t^{(1-\alpha)/2}\langle
   \nabla^2\theta
   \rangle_{\alpha/2,\Omega\times[0,T)}
   |x-x'|^\alpha
   \leq
   T^{1/2}\langle
   \nabla^2\theta
   \rangle_{\alpha/2,\Omega\times[0,T)}
   |x-x'|^\alpha.
 \end{equation}
 Next, we consider the case $|x-x'|\geq t^{1/2}$. Using \eqref{eq:2-2-4},
 we have,
 \begin{equation*}
   |\nabla\theta(x,t)|
   =
   \frac{|\nabla\theta(x,t)-\nabla\theta(x,0)|}{|t-0|^{(1+\alpha)/2}}
   |t-0|^{(1+\alpha)/2}
   \leq
   T^{1/2}\langle\nabla \theta\rangle_{(1+\alpha)/2,\Omega\times[0,T)}
   |x-x'|^\alpha,
 \end{equation*}
 hence we obtain,
 \begin{equation}
  \label{eq:2-2-7}
  |\nabla\theta(x,t)-\nabla\theta(x',t)|
   \leq
   |\nabla\theta(x,t)|+|\nabla\theta(x',t)|
   \leq
   2  T^{1/2}\langle\nabla \theta\rangle_{(1+\alpha)/2,\Omega\times[0,T)}|x-x'|^\alpha.
 \end{equation}
 Combining \eqref{eq:2-2-6} and \eqref{eq:2-2-7} we arrive at,
 \begin{equation}
  \label{eq:2-2-8}
  [\nabla\theta]_{\alpha,\Omega\times[0,T)}
   \leq
   2  T^{1/2}\|\theta\|_{C^{2+\alpha,1+\alpha/2}(\Omega\times[0,T))}.
 \end{equation}

 Finally, we consider
 $\langle\nabla\theta\rangle_{\alpha/2,\Omega\times[0,T)}$. For
 $x\in\Omega$ and $t,t'\in(0,T)$, we have
 \begin{equation*}
  |\nabla\theta(x,t)-\nabla\theta(x,t')|
   \leq
   \frac{|\nabla\theta(x,t)-\nabla\theta(x,t')|}{|t-t'|^{(1+\alpha)/2}}|t-t'|^{(1+\alpha)/2}
   \leq
   T^{1/2}|t-t'|^{\alpha/2}\langle\nabla\theta\rangle_{(1+\alpha)/2,\Omega\times[0,T)},
 \end{equation*}
 hence
 \begin{equation}
  \label{eq:2-2-9}
  \langle\nabla\theta\rangle_{\alpha/2,\Omega\times[0,T)}
   \leq
   T^{1/2}\|\theta\|_{C^{2+\alpha,1+\alpha/2}(\Omega\times[0,T))}.
 \end{equation}
 Combining \eqref{eq:2-2-5}, \eqref{eq:2-2-8}, and \eqref{eq:2-2-9}, we
 obtain the desired estimate \eqref{eq:2-2-2}.
\end{proof}
\begin{remark}
 Note that, for arbitrary continuous function
 $\theta:\Omega\times[0,T)\rightarrow\R$,
 \begin{equation*}
  \|\theta\|_{C(\Omega\times[0,T))}
   \geq
   \sup_{x\in\Omega}|\theta(x,0)|,
 \end{equation*}
 hence, in general, we cannot obtain the decay estimate \eqref{eq:2-2-2}, unless $\theta=0$ at
 $t=0$.
\end{remark}
Next, we derive the decay estimate of
$\|\theta\|_{C^{\alpha,\alpha/2}(\Omega\times[0,T))}$ that will be also
used for $\zeta\in X_{M,T}$.
\begin{lemma}
 \label{lem:2-2-2}
 Let arbitrary function $\theta\in C^{2+\alpha,1+\alpha/2}(\Omega\times[0,T))$,
 $\theta(x,0)=0$ for $x\in\Omega$. Then,
 \begin{equation}
  \label{eq:2-2-3}
  \|\theta\|_{C^{\alpha,\alpha/2}(\Omega\times[0,T))}
   \leq
   3 (T+T^{1-\alpha/2})\|\theta\|_{C^{2+\alpha,1+\alpha/2}(\Omega\times[0,T))}.
 \end{equation}
 Thus, for $\zeta\in X_{M,T}$, the estimate~\eqref{eq:2-2-3} holds as well.
\end{lemma}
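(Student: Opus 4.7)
The plan is to mirror the strategy of Lemma \ref{lem:2-2-1}: split $\|\theta\|_{C^{\alpha,\alpha/2}(\Omega\times[0,T))}$ into the sup norm $\|\theta\|_{C(\Omega\times[0,T))}$, the spatial semi-norm $[\theta]_{\alpha,\Omega\times[0,T)}$, and the temporal semi-norm $\langle\theta\rangle_{\alpha/2,\Omega\times[0,T)}$, and bound each of the three pieces separately using the initial condition $\theta(x,0)=0$, the fact that this forces $\nabla\theta(x,0)=0$ as well, and the $C^{2+\alpha,1+\alpha/2}$-regularity of $\theta$.

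For the sup norm I would use the Lipschitz-in-time bound coming from $\|\partial_t\theta\|_C\le\|\theta\|_{C^{2+\alpha,1+\alpha/2}}$: since $\theta(x,0)=0$, we have $|\theta(x,t)|=|\theta(x,t)-\theta(x,0)|\le t\,\|\partial_t\theta\|_C \le T\|\theta\|_{C^{2+\alpha,1+\alpha/2}}$. For the temporal semi-norm I would interpolate the Lipschitz-in-$t$ estimate with a factor of $|t-t'|^{\alpha/2}$:
\begin{equation*}
|\theta(x,t)-\theta(x,t')|\le|t-t'|\,\|\partial_t\theta\|_C =|t-t'|^{\alpha/2}|t-t'|^{1-\alpha/2}\|\partial_t\theta\|_C\le T^{1-\alpha/2}|t-t'|^{\alpha/2}\|\theta\|_{C^{2+\alpha,1+\alpha/2}}.
\end{equation*}

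The main work is the spatial semi-norm, and it is handled by a parabolic case split, exactly as in the proof of Lemma \ref{lem:2-2-1}. In the case $|x-x'|^2\ge t$, I would absorb one factor $t^{1/2}\le |x-x'|$ into $|x-x'|^\alpha$ by writing $t=t^{\alpha/2}t^{1-\alpha/2}$, so that applying the sup bound twice gives $|\theta(x,t)-\theta(x',t)|\le 2t\,\|\partial_t\theta\|_C\le 2T^{1-\alpha/2}|x-x'|^\alpha\,\|\theta\|_{C^{2+\alpha,1+\alpha/2}}$. In the case $|x-x'|^2<t$, the convexity of $\Omega$ and the fundamental theorem of calculus give $|\theta(x,t)-\theta(x',t)|\le|x-x'|\sup_{z\in\Omega}|\nabla\theta(z,t)|$, and the gradient decay already established in the proof of Lemma \ref{lem:2-2-1} (which uses $\nabla\theta(x,0)=0$) bounds the latter supremum by $T^{(1+\alpha)/2}\|\theta\|_{C^{2+\alpha,1+\alpha/2}}$; writing $|x-x'|=|x-x'|^{1-\alpha}|x-x'|^\alpha\le t^{(1-\alpha)/2}|x-x'|^\alpha\le T^{(1-\alpha)/2}|x-x'|^\alpha$ then yields a bound of the form $T\,|x-x'|^\alpha\|\theta\|_{C^{2+\alpha,1+\alpha/2}}$.

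Summing the three contributions produces a constant of the form $2T + 3T^{1-\alpha/2}\le 3(T+T^{1-\alpha/2})$ in front of $\|\theta\|_{C^{2+\alpha,1+\alpha/2}(\Omega\times[0,T))}$, which is the claimed estimate. The main subtlety — and essentially the only one — is choosing the parabolic dichotomy $|x-x'|^2\lessgtr t$ so that in each branch the exponents of $T$ balance correctly; everything else is a direct application of $\theta(\cdot,0)=0$, $\nabla\theta(\cdot,0)=0$ and the defining semi-norms of $C^{2+\alpha,1+\alpha/2}$.
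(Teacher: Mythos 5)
Your proposal is correct and follows essentially the same route as the paper's own proof: the identical three-way decomposition of the $C^{\alpha,\alpha/2}$ norm, the same use of $|\theta(x,t)|\le t\|\partial_t\theta\|_{C}$ for the sup and temporal parts, and the same parabolic dichotomy $|x-x'|\lessgtr t^{1/2}$ for the spatial semi-norm, with the gradient decay estimate from Lemma \ref{lem:2-2-1} invoked in the near-diagonal case. The resulting constant $2T+3T^{1-\alpha/2}\le 3(T+T^{1-\alpha/2})$ matches the paper exactly.
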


\begin{proof}
 First we consider $\|\theta\|_{C(\Omega\times[0,T))}$. For
 $x\in\Omega$ and $t\in(0,T)$, we have by $\theta(x,0)=0$,
 \begin{equation}
  \label{eq:2-2-1}
  |\theta(x,t)|
   =
   |\theta(x,t)-\theta(x,0)|
   =
   \left|
    \int_0^t
    \theta_t(x,\tau)
    \,d\tau
   \right|
   \leq
   t\|\theta_t\|_{C(\Omega\times[0,T))}
   ,
 \end{equation}
 thus,
 \begin{equation}
  \label{eq:2-2-10}
  \|\theta\|_{C(\Omega\times[0,T))}
   \leq
   T\|\theta\|_{C^{2+\alpha,1+\alpha/2}(\Omega\times[0,T))}.
 \end{equation}

 Next, we give the estimate of $[\theta]_{\alpha,\Omega\times[0,T)}$. For
 $x,x'\in\Omega$ and $t\in(0,T)$, we first assume $|x-x'|<t^{1/2}$.
 Then, 
 again using the assumption that $\Omega$ is convex,
 the fundamental theorem of calculus and \eqref{eq:2-2-5} lead to,
 \begin{equation*}
  \begin{split}
   |\theta(x,t)-\theta(x',t)|
   &\leq
   |x-x'|\int_0^1
   |\nabla\theta(\tau x+(1-\tau)x',t)|
   \,d\tau
   \\
   &\leq
   T^{(1+\alpha)/2}|x-x'|\|\theta\|_{C^{2+\alpha,1+\alpha/2}(\Omega\times[0,T))}. 
  \end{split}
 \end{equation*}
 Using the assumption $|x-x'|<t^{1/2}$, we have again,
 \begin{equation*}
   |\theta(x,t)-\theta(x',t)|
   \leq T^{(1+\alpha)/2}
   t^{(1-\alpha)/2}
   \|\theta\|_{C^{2+\alpha,1+\alpha/2}(\Omega\times[0,T))}|x-x'|^\alpha
   \leq
   T\|\theta\|_{C^{2+\alpha,1+\alpha/2}(\Omega\times[0,T))}|x-x'|^\alpha.
 \end{equation*}
 Next, we consider the case that $|x-x'|\geq t^{1/2}$. Using the estimate~\eqref{eq:2-2-1}, we have,
 \begin{equation*}
  \begin{split}
   |\theta(x,t)-\theta(x',t)|
   &\leq
   |\theta(x,t)|+|\theta(x',t)| \\
   &\leq
   2t
   \|\theta_t\|_{C(\Omega\times[0,T))}
   \leq
   2 t^{1-\alpha/2}
   \|\theta\|_{C^{2+\alpha,1+\alpha/2}(\Omega\times[0,T))}|x-x'|^\alpha\\
    &\leq
   2 T^{1-\alpha/2}
   \|\theta\|_{C^{2+\alpha,1+\alpha/2}(\Omega\times[0,T))}|x-x'|^\alpha
   .
  \end{split}
 \end{equation*}
 Combining these estimates, we arrive at,
 \begin{equation}
  \label{eq:2-2-11}
  [\theta]_{\alpha,\Omega\times[0,T)}
   \leq
    (T+2T^{1-\alpha/2})\|\theta\|_{C^{2+\alpha,1+\alpha/2}(\Omega\times[0,T))}.
 \end{equation}

 Finally, we consider
 $\langle\theta\rangle_{\alpha/2,\Omega\times[0,T)}$. For $x\in\Omega$ and
 $t,t'\in(0,T)$, the fundamental theorem of calculus leads to,
 \begin{equation*}
  |\theta(x,t)-\theta(x,t')|
   \leq
   \left|
    \int_{t'}^t \theta_t(x,\tau)\,d\tau
   \right|
   \leq |t-t'|
   \|\theta_t\|_{C(\Omega\times[0,T))}
   \leq T^{1-\alpha/2}|t-t'|^{\alpha/2}
   \|\theta_t\|_{C(\Omega\times[0,T))},
 \end{equation*}
 hence,
 \begin{equation}
  \label{eq:2-2-12}
   \langle\theta\rangle_{\alpha/2,\Omega\times[0,T)}
   \leq T^{1-\alpha/2}
   \|\theta\|_{C^{2+\alpha,1+\alpha/2}(\Omega\times[0,T))}.
 \end{equation}
 Combining \eqref{eq:2-2-10}, \eqref{eq:2-2-11}, and \eqref{eq:2-2-12},
 we obtain estimate \eqref{eq:2-2-3}.
\end{proof}

\begin{remark}
 In the proof of the Lemmas above, in order to apply the fundamental theorem of
 calculus, we assumed the sufficient condition on the domain $\Omega$ to be
 convex. However one may generalize the assumptions on the domain to more general
 conditions.
\end{remark}

We later use the norm of the product of the H\"older functions
(cf. \cite[\S 8.5]{MR1406091}). Therefore, we establish the following result.
It is well-known inequalities (for instance, see \cite[\S
4.1]{MR1814364}), but we give a proof for readers convenience. 
\begin{lemma}
\label{lem:2-2-3}
 For functions $\theta \in C^{\alpha,\alpha/2}(\Omega\times[0,T))$ and $\tilde{\theta}\in C^{\alpha,\alpha/2}(\Omega\times[0,T))$,
 the product of  $\theta\tilde{\theta}$ is also in
 $C^{\alpha,\alpha/2}(\Omega\times[0,T))$. Moreover, the following estimate holds,
 \begin{equation*}
  \|\theta\tilde{\theta}\|_{C^{\alpha,\alpha/2}(\Omega\times[0,T))}
   \leq
   \|\theta\|_{C^{\alpha,\alpha/2}(\Omega\times[0,T))}
   \|\tilde{\theta}\|_{C^{\alpha,\alpha/2}(\Omega\times[0,T))}.
 \end{equation*}
\end{lemma}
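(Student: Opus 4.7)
The plan is to estimate each of the three ingredients of $\|\theta\tilde\theta\|_{C^{\alpha,\alpha/2}(\Omega\times[0,T))}$ separately, namely the supremum norm $\|\theta\tilde\theta\|_{C(\Omega\times[0,T))}$, the spatial H\"older seminorm $[\theta\tilde\theta]_{\alpha,\Omega\times[0,T)}$, and the temporal H\"older seminorm $\langle\theta\tilde\theta\rangle_{\alpha/2,\Omega\times[0,T)}$, and then to sum them and compare with the expansion of the product $\|\theta\|_{C^{\alpha,\alpha/2}}\|\tilde\theta\|_{C^{\alpha,\alpha/2}}$.

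First, the bound on the supremum norm is immediate from the pointwise inequality $|\theta(x,t)\tilde\theta(x,t)|\leq |\theta(x,t)|\,|\tilde\theta(x,t)|$, which gives $\|\theta\tilde\theta\|_{C(\Omega\times[0,T))}\leq \|\theta\|_{C(\Omega\times[0,T))}\|\tilde\theta\|_{C(\Omega\times[0,T))}$. Next, for the spatial seminorm I would use the standard add-and-subtract trick,
\begin{equation*}
 \theta(x,t)\tilde\theta(x,t)-\theta(x',t)\tilde\theta(x',t)
 =
 \theta(x,t)\bigl(\tilde\theta(x,t)-\tilde\theta(x',t)\bigr)
 +
 \tilde\theta(x',t)\bigl(\theta(x,t)-\theta(x',t)\bigr),
\end{equation*}
divide by $|x-x'|^\alpha$, and take the supremum to obtain
\begin{equation*}
 [\theta\tilde\theta]_{\alpha,\Omega\times[0,T)}
 \leq
 \|\theta\|_{C(\Omega\times[0,T))}[\tilde\theta]_{\alpha,\Omega\times[0,T)}
 +
 \|\tilde\theta\|_{C(\Omega\times[0,T))}[\theta]_{\alpha,\Omega\times[0,T)}.
\end{equation*}
An identical splitting in the time variable yields the analogous bound for $\langle\theta\tilde\theta\rangle_{\alpha/2,\Omega\times[0,T)}$.

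Finally, I would add the three resulting estimates and recognize that the sum
\begin{equation*}
 \|\theta\|_C\|\tilde\theta\|_C
 +
 \|\theta\|_C[\tilde\theta]_\alpha+\|\tilde\theta\|_C[\theta]_\alpha
 +
 \|\theta\|_C\langle\tilde\theta\rangle_{\alpha/2}+\|\tilde\theta\|_C\langle\theta\rangle_{\alpha/2}
\end{equation*}
is dominated term-by-term by the full expansion of
$(\|\theta\|_C+[\theta]_\alpha+\langle\theta\rangle_{\alpha/2})(\|\tilde\theta\|_C+[\tilde\theta]_\alpha+\langle\tilde\theta\rangle_{\alpha/2})=\|\theta\|_{C^{\alpha,\alpha/2}}\|\tilde\theta\|_{C^{\alpha,\alpha/2}}$, since the cross terms $[\theta]_\alpha[\tilde\theta]_\alpha$, $[\theta]_\alpha\langle\tilde\theta\rangle_{\alpha/2}$, $\langle\theta\rangle_{\alpha/2}[\tilde\theta]_\alpha$, $\langle\theta\rangle_{\alpha/2}\langle\tilde\theta\rangle_{\alpha/2}$ in the product expansion are all nonnegative. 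This gives the stated inequality without any additional constant.

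There is essentially no obstacle here; the result is the usual multiplicative property of H\"older norms, and the only small issue to be careful about is bookkeeping, namely choosing to put $\|\theta\|_C$ with the increment of $\tilde\theta$ and $\|\tilde\theta\|_C$ with the increment of $\theta$, so that every term produced by the splitting appears on the right-hand side of the expanded product of the two H\"older norms.
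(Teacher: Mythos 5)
Your proposal is correct and follows essentially the same argument as the paper: bound the sup norm pointwise, split the spatial and temporal increments of the product by adding and subtracting a cross term, and then absorb the resulting five terms into the expansion of the product of the two H\"older norms, discarding the nonnegative cross terms. The only cosmetic difference is which factor carries the supremum norm in the splitting, which is immaterial.
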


\begin{proof}
 For $x,x'\in\Omega$, $0< t,t'<T$, we have,
 \begin{equation}
\label{est_prH}
  |\theta(x,t)\tilde{\theta}(x,t)|\leq
   \|\theta\|_{C(\Omega\times[0,T))}
   \|\tilde{\theta}\|_{C(\Omega\times[0,T))}.
 \end{equation}
In addition, we obtain that,
 \begin{equation*}
  \begin{split}
   |\theta(x,t)\tilde{\theta}(x,t)
   -
   \theta(x',t)\tilde{\theta}(x',t)|
   &\leq
   |(\theta(x,t)-\theta(x',t))\tilde{\theta}(x,t)|
   +
   |\theta(x',t)(\tilde{\theta}(x,t)-\tilde{\theta}(x',t))| \\
   &\leq
   \left(
   [\theta]_{\alpha,\Omega\times[0,T)}\|\tilde{\theta}\|_{C(\Omega\times[0,T))}
   +
   \|\theta\|_{C(\Omega\times[0,T))}
   [\tilde{\theta}]_{\alpha,\Omega\times[0,T)}
   \right)
   |x-x'|^\alpha.
  \end{split}
 \end{equation*}
Hence, we have that,
\begin{equation}
\label{est_prH1}
 [\theta \tilde{\theta}]_{\alpha,\Omega\times[0,T)} \leq
   [\theta]_{\alpha,\Omega\times[0,T)}\|\tilde{\theta}\|_{C(\Omega\times[0,T))}
   +
   \|\theta\|_{C(\Omega\times[0,T))}
   [\tilde{\theta}]_{\alpha,\Omega\times[0,T)}.
\end{equation}
Similarly, 
 \begin{equation*}
  \begin{split}
   |\theta(x,t)\tilde{\theta}(x,t)
   -
   \theta(x,t')\tilde{\theta}(x,t')|
   &\leq
   |(\theta(x,t)-\theta(x,t'))\tilde{\theta}(x,t)|
   +
   |\theta(x,t')(\tilde{\theta}(x,t)-\tilde{\theta}(x,t'))| \\
   &\leq
   \left(
   \langle\theta\rangle_{\alpha/2,\Omega\times[0,T)}\|\tilde{\theta}\|_{C(\Omega\times[0,T))}
   +
   \|\theta\|_{C(\Omega\times[0,T))}
   \langle\tilde{\theta}\rangle_{\alpha/2,\Omega\times[0,T)}
   \right)
   |t-t'|^{\alpha/2}.
  \end{split}
 \end{equation*}
Thus, we obtain,
\begin{equation}
\label{est_prH2}
\langle \theta \tilde{\theta}\rangle_{\alpha/2,\Omega\times[0,T)}\leq
   \langle\theta\rangle_{\alpha/2,\Omega\times[0,T)}\|\tilde{\theta}\|_{C(\Omega\times[0,T))}
   +
   \|\theta\|_{C(\Omega\times[0,T))}
   \langle\tilde{\theta}\rangle_{\alpha/2,\Omega\times[0,T)}.
\end{equation}

 Therefore, combining above estimates
 \eqref{est_prH}-\eqref{est_prH2}, we arrive at the desired inequality,
 \begin{equation*}
  \begin{split}
   \|\theta\tilde{\theta}\|_{C^{\alpha,\alpha/2}(\Omega\times[0,T))}
   &=
   \|\theta\tilde{\theta}\|_{C(\Omega\times[0,T))}
   +
   [\theta\tilde{\theta}]_{\alpha,\Omega\times[0,T)}
   +
   \langle\theta\tilde{\theta}\rangle_{\alpha/2,\Omega\times[0,T)} \\
   &\leq
   \|\theta\|_{C(\Omega\times[0,T))}
   \|\tilde{\theta}\|_{C(\Omega\times[0,T))}
   +
   \left(
   [\theta]_{\alpha,\Omega\times[0,T)}\|\tilde{\theta}\|_{C(\Omega\times[0,T))}
   +
   \|\theta\|_{C(\Omega\times[0,T))}
   [\tilde{\theta}]_{\alpha,\Omega\times[0,T)}
   \right) \\
   &\qquad
   +
   \left(
   \langle\theta\rangle_{\alpha/2,\Omega\times[0,T)}\|\tilde{\theta}\|_{C(\Omega\times[0,T))}
   +
   \|\theta\|_{C(\Omega\times[0,T))}
   \langle\tilde{\theta}\rangle_{\alpha/2,\Omega\times[0,T)}
   \right) \\
   &\leq
   \|\theta\|_{C^{\alpha,\alpha/2}(\Omega\times[0,T))}
   \|\tilde{\theta}\|_{C^{\alpha,\alpha/2}(\Omega\times[0,T))}.
  \end{split}
 \end{equation*}
\end{proof}

In this section, results of Lemma~\ref{lem:2-2-1} and Lemma~\ref{lem:2-2-2}
hold for any function $\zeta\in X_{M,T}$. Therefore, we obtained the decay estimates
for the H\"older norms $\|\nabla
\zeta\|_{C^{\alpha,\alpha/2}(\Omega\times[0,T))}$ and
$\|\zeta\|_{C^{\alpha,\alpha/2}(\Omega\times[0,T))}$ of $\zeta\in X_{M,T}$. As a
consequence, in the following sections,  for $\psi\in
X_{M,T}$, the nonlinear term $G(\psi)$ can be treated as a small
perturbation in terms of the H\"older norms.
\subsection{Well-definedness of the solution map}
\label{sec:2-3}
Here, we recall the function space $X_{M,T}$ defined in
\eqref{eq:2-3-9}. Here, for $\psi\in
X_{M,T}$, our goal is to consider first the linear parabolic equation \eqref{eq:2-3-8}
associated with the nonlinear problem \eqref{eq:2-1-4}. We also recall the
definition of the solution map $A:\psi\mapsto\eta$ from the
Definition~\ref{def:solmap} associated with the linear parabolic model \eqref{eq:2-3-8}.
Therefore, 
in this Section~\ref{sec:2-3} and in the next Section~\ref{sec:2-4}, we are going to show that the
solution map $A:\psi\mapsto\eta$ is a contraction mapping on $X_{M,T}$,
where $\eta$ is a solution of \eqref{eq:2-3-8}. Once we will show that the
solution map $A$ is a contraction, we can obtain a fixed point $\xi\in
X_{M,T}$ for the solution map $A$,  and hence $\xi$ will be a solution of
\eqref{eq:2-1-4},  \cite[\S 7.2]{MR2759829}.
\par First, we will show that the solution map is well-defined on
$X_{M,T}$, namely that there exist appropriate positive constants $M,T>0$ such that for any
$\psi\in X_{M,T}$, solution $\eta=A\psi$ of the linear parabolic equation
\eqref{eq:2-3-8} belongs to $X_{M,T}$.

Let us now recall the Schauder estimates for the following linear
parabolic equation:
\begin{equation}
 \label{eq:2-3-1}
 \left\{
  \begin{aligned}
   \frac{\partial w}{\partial t}
   &=
   Lw+g(x,t),\quad
   x\in\Omega,\
   t>0, \\
   \nabla
   w
   \cdot
   \nu
   \bigg|_{\partial\Omega}
   &=
   0,\quad
   t>0, \\
   w(0,x)
   &=
   0,
   \quad
   x\in\Omega.
  \end{aligned}
 \right.
\end{equation}
Here, the operator $L$ is defined in \eqref{eq:2-1-1}. The following Schauder estimates
for the solution of \eqref{eq:2-3-1} can be applicable.
\begin{proposition}%
 [{\cite[Theorem 5.3 in Chapter IV]{MR0241822}, \cite[Theorem
 4.31]{MR1465184}}] \label{prop:2-3-1}
 Assume the strong positivity \eqref{eq:2-0-1}, the
 regularity \eqref{eq:2-0-2}, and let $L$ be the differential operator
 defined in \eqref{eq:2-1-1}.
 For any H\"older continuous function $g\in
 C^{\alpha,\alpha/2}(\Omega\times[0,T))$, there uniquely exists a
 solution $w \in C^{2+\alpha,1+\alpha/2}(\Omega\times[0,T))$ of
 \eqref{eq:2-3-1},  such that,
 \begin{equation}
  \label{eq:2-3-2}
  \|w\|_{C^{2+\alpha,1+\alpha/2}(\Omega\times[0,T))}
   \leq
   \Cl{const:2.2}\|g\|_{C^{\alpha,\alpha/2}(\Omega\times[0,T))},
 \end{equation}
 where $\Cr{const:2.2}>0$ is a positive constant.
\end{proposition}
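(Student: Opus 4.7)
The plan is to verify the hypotheses of the classical parabolic Schauder theory with oblique boundary conditions and then directly invoke the cited results \cite{MR0241822,MR1465184}; the proof is essentially a bookkeeping exercise.

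First, I would check uniform parabolicity: the principal coefficient $(b(x,t))^2/2$ is bounded below by $(\Cr{const:2.1})^2/2>0$ by the strong positivity assumption \eqref{eq:2-0-1}, and lies in $C^{1+\alpha,(1+\alpha)/2}(\Omega\times[0,T))$ by \eqref{eq:2-0-2}. The boundary $\partial\Omega$ is $C^{2+\alpha}$ by \eqref{eq:2-0-2}, the outer unit normal $\nu$ is then $C^{1+\alpha}$, and the boundary operator $\nabla w \cdot \nu$ is a regular (non-tangential) oblique derivative, so it is admissible in the Schauder framework.

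Second, I would verify that every lower-order coefficient in the explicit expression for $L$ from \eqref{eq:2-1-1} belongs to $C^{\alpha,\alpha/2}(\Omega\times[0,T))$. The building blocks are $b^2 \in C^{1+\alpha,(1+\alpha)/2}$, $D, \phi \in C^{2+\alpha}$, $f^{\mathrm{eq}} = \exp(-(\phi-\Cr{const:2.9})/D) \in C^{2+\alpha}(\Omega)$ (since $D$ is bounded below away from zero), and $h_0 = D\log \rho_0 \in C^{2+\alpha}(\Omega)$ (using positivity of $\rho_0$ and of $D$). Because $D$ and $f^{\mathrm{eq}}$ are strictly positive, their reciprocals inherit the same regularity, and then Lemma~\ref{lem:2-2-3} together with the spatial analogue of product and chain rules delivers $C^{\alpha,\alpha/2}$ regularity of each term. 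In particular, coefficients involving one spatial gradient of these $C^{2+\alpha}$ or $C^{1+\alpha,(1+\alpha)/2}$ functions are still $C^{\alpha,\alpha/2}$ after taking products.

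Third, I would check the zero-order compatibility condition at $t=0$: since $w(x,0)=0$, the Neumann datum $\nabla w \cdot \nu|_{\partial\Omega} = 0$ is automatically matched at $t=0$. With uniform parabolicity, $C^{\alpha,\alpha/2}$ coefficients and forcing $g$, a $C^{2+\alpha}$ boundary, an oblique boundary operator with $C^{1+\alpha,(1+\alpha)/2}$ coefficients, and matched initial/boundary data, the classical Schauder theory (Theorem 5.3, Chapter IV of \cite{MR0241822}, or Theorem 4.31 of \cite{MR1465184}) delivers a unique classical solution $w \in C^{2+\alpha,1+\alpha/2}(\Omega\times[0,T))$ together with the a priori estimate \eqref{eq:2-3-2}, with $\Cr{const:2.2}$ depending only on $\alpha$, $\Omega$, $T$, and the relevant H\"older norms of the coefficients.

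The main, and essentially only, obstacle is the coefficient bookkeeping: one must verify that the somewhat intricate expressions such as $\frac{D(x)}{f^{\mathrm{eq}}(x)}\nabla\bigl(\frac{(b(x,t))^2}{2D(x)} f^{\mathrm{eq}}(x)\bigr)$ and $\frac{(b(x,t))^2 h_0(x)}{2(D(x))^2}\nabla D(x)$ really lie in $C^{\alpha,\alpha/2}$. This reduces, via Lemma~\ref{lem:2-2-3} and the positivity of the denominators, to the algebra of H\"older spaces; after this check, existence, uniqueness, and the Schauder estimate are direct consequences of the cited theorems.
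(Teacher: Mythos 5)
Your proposal is correct and matches the paper's treatment: the paper gives no proof of this proposition beyond citing the classical Schauder theory of Ladyzhenskaya--Solonnikov--Uraltseva and Lieberman, and your verification of the hypotheses (uniform parabolicity from \eqref{eq:2-0-1}, $C^{\alpha,\alpha/2}$ regularity of the coefficients of $L$ from \eqref{eq:2-0-2} together with the positivity of $D$ and $f^{\mathrm{eq}}$, and zeroth-order compatibility of the homogeneous initial and Neumann data) is exactly the bookkeeping needed to invoke those theorems legitimately.
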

Using the Schauder estimate \eqref{eq:2-3-2}, we now show the well-definedness
of the solution map $A$ in $X_{M,T}$.

\begin{lemma}
 \label{lem:2-3-1}
 Assume the strong positivity \eqref{eq:2-0-1}, the
 regularity \eqref{eq:2-0-2}, and let $L$ be the differential operator
 defined in \eqref{eq:2-1-1}. Then,  there are constants $M>0$ and
 $T_0>0$, such that for $0<T\leq T_0$ and $\psi\in X_{M,T}$, the image
 of the solution map $A\psi$ belongs to $X_{M,T}$ and the map $A$ is
 well-defined on $X_{M,T}$.
\end{lemma}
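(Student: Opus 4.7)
The plan is to invoke the Schauder estimate from Proposition \ref{prop:2-3-1} to bound $\|A\psi\|_{C^{2+\alpha,1+\alpha/2}}$ by the $C^{\alpha,\alpha/2}$-norm of the source term $g_0 + G(\psi)$, then split this source into a fixed ``data'' part (independent of $\psi$) and a ``nonlinear'' part (depending quadratically on $\psi$), and absorb the latter using the smallness estimates from Lemmas \ref{lem:2-2-1} and \ref{lem:2-2-2} together with the product estimate in Lemma \ref{lem:2-2-3}.

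First I would check that $\eta = A\psi$ is actually defined: by the assumptions \eqref{eq:2-0-1}--\eqref{eq:2-0-2}, the coefficients of $L$ in \eqref{eq:2-1-1} are built from $b^2$, $D$, $\phi$, $f^{\mathrm{eq}}$, and $h_0 = D\log\rho_0$, each lying in the appropriate parabolic H\"older space, and the positivity of $b$ and $D$ gives uniform ellipticity; similarly $g_0 \in C^{\alpha,\alpha/2}(\Omega\times[0,T))$ with a norm bounded by a constant $K_0$ depending only on the data. For $\psi \in X_{M,T}$, applying Lemma \ref{lem:2-2-3} repeatedly to the two terms of $G(\psi)$ yields a bound of the form
\begin{equation*}
 \|G(\psi)\|_{C^{\alpha,\alpha/2}(\Omega\times[0,T))}
  \leq
  K_1 \Bigl(
   \|\nabla\psi\|_{C^{\alpha,\alpha/2}(\Omega\times[0,T))}^2
   +
   \|\psi\|_{C^{\alpha,\alpha/2}(\Omega\times[0,T))}
   \|\nabla\psi\|_{C^{\alpha,\alpha/2}(\Omega\times[0,T))}
  \Bigr),
\end{equation*}
where $K_1$ depends on the $C^{\alpha,\alpha/2}$-norms of $b^2$, $D$, $1/D$, $f^{\mathrm{eq}}$, and $\nabla D$. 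In particular, $g_0 + G(\psi) \in C^{\alpha,\alpha/2}$, so Proposition \ref{prop:2-3-1} furnishes a unique solution $\eta \in C^{2+\alpha,1+\alpha/2}(\Omega\times[0,T))$ of \eqref{eq:2-3-8} satisfying
\begin{equation*}
 \|\eta\|_{C^{2+\alpha,1+\alpha/2}(\Omega\times[0,T))}
  \leq
  \Cr{const:2.2}\bigl(\|g_0\|_{C^{\alpha,\alpha/2}(\Omega\times[0,T))} + \|G(\psi)\|_{C^{\alpha,\alpha/2}(\Omega\times[0,T))}\bigr).
\end{equation*}

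Next I would use the vanishing of $\psi$ at $t=0$: by Lemmas \ref{lem:2-2-1} and \ref{lem:2-2-2}, applied to $\psi$, there is an increasing function $\omega(T)$ with $\omega(T) \downarrow 0$ as $T \downarrow 0$ (explicitly $\omega(T) = 3(T^{(1+\alpha)/2} + T^{1/2}) + 3(T + T^{1-\alpha/2})$) such that
\begin{equation*}
 \|\nabla\psi\|_{C^{\alpha,\alpha/2}(\Omega\times[0,T))}
 +
 \|\psi\|_{C^{\alpha,\alpha/2}(\Omega\times[0,T))}
 \leq
 \omega(T) \|\psi\|_{C^{2+\alpha,1+\alpha/2}(\Omega\times[0,T))}
 \leq
 \omega(T) M.
\end{equation*}
Feeding this into the bound on $G(\psi)$ gives $\|G(\psi)\|_{C^{\alpha,\alpha/2}} \leq 2 K_1 \omega(T)^2 M^2$, so together
\begin{equation*}
 \|A\psi\|_{C^{2+\alpha,1+\alpha/2}(\Omega\times[0,T))}
  \leq
  \Cr{const:2.2} K_0 + 2\Cr{const:2.2} K_1 \omega(T)^2 M^2.
\end{equation*}

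Finally I would choose constants in the right order: set $M = 2\Cr{const:2.2} K_0$, then choose $T_0 > 0$ small enough that $2\Cr{const:2.2} K_1 \omega(T_0)^2 M^2 \leq M/2$; this is possible since $\omega(T) \to 0$. For any $0 < T \leq T_0$ and any $\psi \in X_{M,T}$ the above gives $\|A\psi\|_{C^{2+\alpha,1+\alpha/2}} \leq M$, while the zero initial condition $\eta(x,0)=0$ and the homogeneous Neumann condition $\nabla\eta\cdot\nu|_{\partial\Omega} = 0$ hold by construction of \eqref{eq:2-3-8}. Hence $A\psi \in X_{M,T}$ and $A$ is well-defined on $X_{M,T}$. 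The main obstacle is simply ensuring the quadratic dependence in $G$ is tamed by the small-$T$ factor $\omega(T)^2$ before $M$ gets chosen; picking $M$ first from the linear part $g_0$ and only then shrinking $T$ is the key ordering that makes the estimate close.
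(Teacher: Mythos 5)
Your proposal is correct and follows essentially the same route as the paper: Schauder estimate for the linear problem with source $g_0+G(\psi)$, a data-only bound on $g_0$, the product estimate of Lemma \ref{lem:2-2-3} combined with the decay estimates of Lemmas \ref{lem:2-2-1} and \ref{lem:2-2-2} to make $G(\psi)$ quadratically small in $T$, and then the choice $M=2\Cr{const:2.2}K_0$ followed by shrinking $T_0$. Your $\omega(T)^2$ plays exactly the role of the paper's $\kappa(T)$, and your smallness condition is equivalent to \eqref{eq:2-3-6}.
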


\begin{proof}
 Let us assume that we have constants $M,T>0$ that will be defined
 later, then consider $\psi\in
 X_{M,T}$.  We use the Schauder estimate \eqref{eq:2-3-2} for $L$ and
 for $g = g_0 + G(\psi)$, where $L$, $G(\psi)$ and $g_0$ are defined as
 in \eqref{eq:2-1-1}.
 First, we note that from the strong positivity
 \eqref{eq:2-0-1} and the regularity \eqref{eq:2-0-2},  there is a
 positive constant $\Cl{const:2.3}>0$ which depends only on
 $\|b\|_{C^{1+\alpha,(1+\alpha)/2}(\Omega\times[0,T))}$,
 $\|D\|_{C^{1+\alpha}(\Omega)}$, $\|\phi\|_{C^{1+\alpha}(\Omega)}$,
 $\|h_0\|_{C^{2+\alpha}(\Omega)}$, and
 the constant $\Cr{const:2.8}$ in \eqref{eq:2-0-1} such that,
 \begin{equation}
  \label{eq:2-3-10}
  \|g_0\|_{C^{\alpha,\alpha/2}(\Omega\times[0,T))}
   \leq
   \Cr{const:2.3}.
 \end{equation}
 Next, we calculate the norm of $\frac{(b(x,t))^2}{2D(x)}
 f^{\mathrm{eq}}(x) |\nabla \psi|^2$. Using Lemma \ref{lem:2-2-3}, 
 the strong positivity \eqref{eq:2-0-1} and the
 regularity \eqref{eq:2-0-2}, we obtain for
 $\psi\in X_{M,T}$,
 \begin{equation*}
  \left\|
   \frac{(b(x,t))^2}{2D(x)}
   f^{\mathrm{eq}}(x)
   |\nabla \psi|^2
  \right\|_{C^{\alpha,\alpha/2}(\Omega\times[0,T))}
  \leq
  \left\|
   \frac{(b(x,t))^2}{2D(x)}
   f^{\mathrm{eq}}(x)
  \right\|_{C^{\alpha,\alpha/2}(\Omega\times[0,T))}
  \left\|
   \nabla \psi
  \right\|^2_{C^{\alpha,\alpha/2}(\Omega\times[0,T))}.
 \end{equation*}
Noting that $\psi(x,0)=0$ for $x\in\Omega$, we can apply Lemma
 \ref{lem:2-2-1} and use the decay estimate \eqref{eq:2-2-2} to show that,
 \begin{equation*}
  \left\|
   \nabla \psi
  \right\|^2_{C^{\alpha,\alpha/2}(\Omega\times[0,T))}
  \leq
  9\|\psi\|^2_{C^{2+\alpha,1+\alpha/2}(\Omega\times[0,T)))}
   (T^{(1+\alpha)/2}+T^{1/2})^2.
 \end{equation*}
 Since $\psi\in X_{M,T}$,
 $\|\psi\|_{C^{2+\alpha,1+\alpha/2}(\Omega\times[0,T)))}\leq M$, hence
 we have,
 \begin{equation}
  \label{eq:2-3-3}
   \left\|
    \frac{(b(x,t))^2}{2D(x)}
    f^{\mathrm{eq}}(x)
    |\nabla \psi|^2
   \right\|_{C^{\alpha,\alpha/2}(\Omega\times[0,T))}
   \leq
   9
   \left\|
    \frac{(b(x,t))^2}{2D(x)}
    f^{\mathrm{eq}}(x)
   \right\|_{C^{\alpha,\alpha/2}(\Omega\times[0,T))}
   M^2
   (T^{(1+\alpha)/2}+T^{1/2})^2.
 \end{equation}
 Next, we calculate the norm of $\frac{(b(x,t))^2}{2(D(x))^2}
 f^{\mathrm{eq}}(x)
 \psi
 \nabla \psi
 \cdot
 \nabla
 D(x)$. Using Lemma \ref{lem:2-2-3}, the strong
 positivity \eqref{eq:2-0-1} and the regularity \eqref{eq:2-0-2}, we estimate,
 \begin{equation*}
  \begin{split}
   \left\|
   \frac{(b(x,t))^2}{2(D(x))^2}
   f^{\mathrm{eq}}(x)
   \psi
   \nabla \psi
   \cdot
   \nabla
   D(x)
   \right\|_{C^{\alpha,\alpha/2}(\Omega\times[0,T))}
   &\leq
   \left\|
   \frac{(b(x,t))^2}{2(D(x))^2}
   f^{\mathrm{eq}}(x)
   \nabla
   D(x)
   \right\|_{C^{\alpha,\alpha/2}(\Omega\times[0,T))} \\
   &\qquad
   \times\left\|
   \psi
   \right\|_{C^{\alpha,\alpha/2}(\Omega\times[0,T))}
   \left\|
   \nabla \psi
   \right\|_{C^{\alpha,\alpha/2}(\Omega\times[0,T))}
   .
  \end{split}
 \end{equation*}
 Using Lemma \ref{lem:2-2-1} and \ref{lem:2-2-2} with the initial
 condition $\psi=0$ at $t=0$, we have by \eqref{eq:2-2-2} and
 \eqref{eq:2-2-3} that,
 \begin{equation*}
  \left\|
   \nabla \psi
  \right\|_{C^{\alpha,\alpha/2}(\Omega\times[0,T))}
  \leq
  3
  \|\psi\|_{C^{2+\alpha,1+\alpha/2}(\Omega\times[0,T)))}
   (T^{(1+\alpha)/2}+T^{1/2}),
 \end{equation*}
 and,
 \begin{equation*}
  \|\psi\|_{C^{\alpha,\alpha/2}(\Omega\times[0,T))}
   \leq
   3
   \|\psi\|_{C^{2+\alpha,1+\alpha/2}(\Omega\times[0,T))}
   (T+T^{1-\alpha/2}).
 \end{equation*}
 Again, since $\psi\in X_{M,T}$,
 $\|\psi\|_{C^{2+\alpha,1+\alpha/2}(\Omega\times[0,T)))}\leq M$, and
 thus, we obtain,
  \begin{equation}
   \label{eq:2-3-4}
   \begin{split}
   \left\|
    \frac{(b(x,t))^2}{2(D(x))^2}
    f^{\mathrm{eq}}(x)
    \psi
    \nabla \psi
    \cdot
    \nabla
    D(x)
    \right\|_{C^{\alpha,\alpha/2}(\Omega\times[0,T))}
    &\leq
    9
    \left\|
    \frac{(b(x,t))^2}{2(D(x))^2}
    f^{\mathrm{eq}}(x)
    \nabla
    D(x)
    \right\|_{C^{\alpha,\alpha/2}(\Omega\times[0,T))} \\
    &\qquad
    \times
    M^2
    (T^{(1+\alpha)/2}+T^{1/2})
    (T+T^{1-\alpha/2}).
   \end{split}
 \end{equation}
 Together with \eqref{eq:2-3-3} and \eqref{eq:2-3-4}, we can take a
 positive constant $\Cl{const:2.4}>0$ which depends only on
 $\|b\|_{C^{1+\alpha,(1+\alpha)/2}(\Omega\times[0,T))}$,
 $\|D\|_{C^{1+\alpha}(\Omega)}$, $\|\phi\|_{C^{1+\alpha}(\Omega)}$,
 and the constant $\Cr{const:2.8}$,  such that,
 \begin{equation}
  \label{eq:2-3-11}
   \left\|
    G(\psi)
   \right\|_{C^{\alpha,\alpha/2}(\Omega\times[0,T))}
   \leq
   \Cr{const:2.4}M^2
   \kappa(T),
 \end{equation}
 where
 \begin{equation}
  \label{eq:2-3-7}
  \kappa(T)
   =
   (T^{(1+\alpha)/2}+T^{1/2})^2
   +
   (T^{(1+\alpha)/2}+T^{1/2})
   (T+T^{1-\alpha/2}).
 \end{equation}
 Note that $\kappa(T)$ is an increasing function with respect to $T>0$ and
 $\kappa(T)\rightarrow0$ as $T\downarrow0$. By the Schauder estimate
 \eqref{eq:2-3-2}, together with \eqref{eq:2-3-10} and
 \eqref{eq:2-3-11}, the solution $\xi=A\psi$ of the linear parabolic
 equation \eqref{eq:2-3-8} satisfies,
 \begin{equation}
  \label{eq:2-3-5}
  \|A\psi\|_{C^{2+\alpha,1+\alpha/2}(\Omega\times[0,T))}
   \leq
   \Cr{const:2.2}
   \left(
    \Cr{const:2.3}
    +
    \Cr{const:2.4}M^2\kappa(T)
   \right).
 \end{equation}

 In order to guarantee
 $\|A\psi\|_{C^{2+\alpha,1+\alpha/2}(\Omega\times[0,T))}\leq M$ for
 $0<T\leq T_0$, we take,
 \begin{equation}
  \label{eq:2-3-6}
  M:=2 \Cr{const:2.2}\Cr{const:2.3},
   \qquad
   \Cr{const:2.4}M^2
   \kappa(T_0)
   \leq
   \Cr{const:2.3}.
 \end{equation}
 Then from \eqref{eq:2-3-5}, $
 \|A\psi\|_{C^{2+\alpha,1+\alpha/2}(\Omega\times[0,T))} \leq M$ for
 $0<T\leq T_0$,  hence $A\psi \in X_{M,T}$.
\end{proof}

\begin{remark}
 Note that from \eqref{eq:2-3-6}, a positive constant $M>0$ depends on
 $\|b\|_{C^{1+\alpha,(1+\alpha)/2}(\Omega\times[0,T))}$,
 $\|D\|_{C^{1+\alpha}(\Omega)}$, $\|\phi\|_{C^{1+\alpha}(\Omega)}$,
 $\|h_0\|_{C^{2+\alpha}(\Omega)}$, and the constant
 $\Cr{const:2.8}$. Also,  from \eqref{eq:2-3-6}, a time interval $T_0>0$
 can be estimated as,
 \begin{equation}
  \label{eq:2-3-12}
  \kappa(T_0)
   \leq
   \frac{1}{4\Cr{const:2.2}^2\Cr{const:2.3}\Cr{const:2.4}}.
  \end{equation}
 Since $\psi=0$ at $t=0$, the auxiliary function $\kappa(T)$  can be written
 explicitly as in \eqref{eq:2-3-7}, in order to
 estimate the H\"older norm of nonlinear term $G(\psi)$. Thus, using \eqref{eq:2-3-12}, we
 obtain the explicit estimate of the time-interval $T_0>0$ to ensure
 that the solution map $A$ is well-defined on $X_{M,T}$.
\end{remark}

\subsection{The contraction property}
\label{sec:2-4}
In this section, we show that the solution map
$A:X_{M,T}\ni\psi\mapsto\eta\in X_{M,T}$, where $\eta$ is a solution of
\eqref{eq:2-3-8}, is contraction on $X_{M,T}$. The explicit decay
estimates for the H\"older norm of $\psi\in X_{M,T}$ obtained in Lemmas
\ref{lem:2-2-1} and \ref{lem:2-2-2}, are essential for the derivation of the
smallness of the nonlinear term $G(\psi)$. Because, for $\psi\in
X_{M,T}$,  H\"older norms
$\|\nabla \psi\|_{C^{\alpha,\alpha/2}(\Omega\times[0,T))}$ and
$\|\psi\|_{C^{\alpha,\alpha/2}(\Omega\times[0,T))}$ continuously go to
$0$ as $T\rightarrow0$, thus,  the Lipschitz constant of $A$ in
$C^{2+\alpha,1+\alpha/2}(\Omega\times[0,T))$ can be
taken smaller than $1$ if $T$ is sufficiently
small. This is the reason why we consider the change of variables
\eqref{eq:2-1-2}, and as result, consider the zero initial value problem
\eqref{eq:2-1-4} subject to the homogeneous Neumann boundary
condition.

\begin{lemma}
 \label{lem:2-4-1}
Assume the strong positivity \eqref{eq:2-0-1},
 regularity \eqref{eq:2-0-2}, and let $L$ be the differential operator
 defined in \eqref{eq:2-1-1}. 
 Let $M>0$ and $T_0>0$ be the constants obtained in
 Lemma \ref{lem:2-3-1}, \eqref{eq:2-3-6}. Then, there exists $T_1 \in
 (0,T_0]$ such that $A$ is contraction on $X_{M,T}$ for $0<T\leq T_1$.
\end{lemma}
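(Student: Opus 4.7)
The plan is to apply the Schauder estimate from Proposition \ref{prop:2-3-1} to the difference $\eta_1 - \eta_2 := A\psi_1 - A\psi_2$ and then exploit the decay estimates for the Hölder norms from Lemma \ref{lem:2-2-1} and Lemma \ref{lem:2-2-2} to ensure that the nonlinear difference produces a prefactor that tends to zero as $T\downarrow 0$. First, given any $\psi_1,\psi_2\in X_{M,T}$, subtracting the two copies of \eqref{eq:2-3-8} shows that $\eta_1 - \eta_2$ satisfies the linear parabolic problem
\begin{equation*}
 \frac{\partial(\eta_1-\eta_2)}{\partial t}
 = L(\eta_1-\eta_2) + \bigl(G(\psi_1)-G(\psi_2)\bigr),\quad
 \nabla(\eta_1-\eta_2)\cdot\nu\big|_{\partial\Omega}=0,\quad
 (\eta_1-\eta_2)(x,0)=0,
\end{equation*}
since the external force $g_0$ cancels. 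Applying \eqref{eq:2-3-2} yields
\begin{equation*}
 \|A\psi_1-A\psi_2\|_{C^{2+\alpha,1+\alpha/2}(\Omega\times[0,T))}
 \leq \Cr{const:2.2}\,\|G(\psi_1)-G(\psi_2)\|_{C^{\alpha,\alpha/2}(\Omega\times[0,T))}.
\end{equation*}

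Next, I would rewrite the nonlinear difference using the algebraic identities
\begin{equation*}
 |\nabla\psi_1|^2 - |\nabla\psi_2|^2 = (\nabla\psi_1+\nabla\psi_2)\cdot\nabla(\psi_1-\psi_2),
 \qquad
 \psi_1\nabla\psi_1 - \psi_2\nabla\psi_2 = \psi_1\nabla(\psi_1-\psi_2) + (\psi_1-\psi_2)\nabla\psi_2,
\end{equation*}
and invoke the Hölder-product inequality of Lemma \ref{lem:2-2-3}, together with the regularity and positivity of $b$, $D$, and $f^{\mathrm{eq}}$, to bound $\|G(\psi_1)-G(\psi_2)\|_{C^{\alpha,\alpha/2}}$ by a sum of four terms, each a product of Hölder norms of the factors $\psi_i$, $\nabla\psi_i$, $\psi_1-\psi_2$, and $\nabla(\psi_1-\psi_2)$.

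The crucial observation is that each of $\psi_1$, $\psi_2$, and $\psi_1-\psi_2$ vanishes at $t=0$, so Lemmas \ref{lem:2-2-1} and \ref{lem:2-2-2} deliver
\begin{equation*}
 \|\nabla\psi_i\|_{C^{\alpha,\alpha/2}(\Omega\times[0,T))} \leq 3M(T^{(1+\alpha)/2}+T^{1/2}),
 \quad
 \|\psi_i\|_{C^{\alpha,\alpha/2}(\Omega\times[0,T))} \leq 3M(T+T^{1-\alpha/2}),
\end{equation*}
together with the analogous bounds for $\psi_1-\psi_2$ with $M$ replaced by $\|\psi_1-\psi_2\|_{C^{2+\alpha,1+\alpha/2}}$. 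Pairing these in each of the four summands so that every product acquires at least one small-$T$ factor, and recalling the function $\kappa(T)$ from \eqref{eq:2-3-7}, one arrives at
\begin{equation*}
 \|G(\psi_1)-G(\psi_2)\|_{C^{\alpha,\alpha/2}(\Omega\times[0,T))}
 \leq C_\star M\,\kappa(T)\,\|\psi_1-\psi_2\|_{C^{2+\alpha,1+\alpha/2}(\Omega\times[0,T))},
\end{equation*}
for a constant $C_\star>0$ depending only on the coefficients through the same quantities that entered Lemma \ref{lem:2-3-1}.

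Substituting back into the Schauder estimate produces the Lipschitz bound $\|A\psi_1-A\psi_2\|_{C^{2+\alpha,1+\alpha/2}} \leq \Cr{const:2.2}\,C_\star M\,\kappa(T)\,\|\psi_1-\psi_2\|_{C^{2+\alpha,1+\alpha/2}}$. Since $\kappa(T)\downarrow 0$ as $T\downarrow 0$, I select $T_1\in(0,T_0]$ so that $\Cr{const:2.2}\,C_\star M\,\kappa(T_1) < 1$, which yields the contraction on $X_{M,T}$ for every $0<T\leq T_1$. The main obstacle is purely bookkeeping: one must pair the product inequality from Lemma \ref{lem:2-2-3} with the two decay lemmas so that no summand is estimated as a product of two norms with only coefficient-level (i.e.\ non-decaying) bounds. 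There is no genuinely new analytic ingredient beyond what was already used in the well-definedness argument in Lemma \ref{lem:2-3-1}; the same structural estimates, now applied to the difference $\psi_1-\psi_2$, propagate the smallness into a contraction factor.
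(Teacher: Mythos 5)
Your proposal is correct and follows essentially the same route as the paper: subtract the two linear problems so that $g_0$ cancels, apply the Schauder estimate of Proposition \ref{prop:2-3-1} to $A\psi_1-A\psi_2$, factor the nonlinear difference via the same two algebraic identities, and combine Lemma \ref{lem:2-2-3} with the decay estimates of Lemmas \ref{lem:2-2-1} and \ref{lem:2-2-2} (applicable since $\psi_1$, $\psi_2$, and $\psi_1-\psi_2$ all vanish at $t=0$) to obtain the Lipschitz factor $\Cr{const:2.2}C_\star M\kappa(T)$, which is then made less than $1$ by shrinking $T$. This matches the paper's argument step for step, including the final selection of $T_1\in(0,T_0]$.
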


\begin{proof}[Proof of Lemma \ref{lem:2-4-1}]
 We take $0<T\leq T_0$, where $T$ will be specified later in the proof. For
 $\psi_1$, $\psi_2\in X_{M,T}$, let
 $\tilde{\eta}:=A\psi_1-A\psi_2$. Then from \eqref{eq:2-3-8},  $\tilde{\eta}$ satisfies,
\begin{equation}
 \label{eq:2-4-11}
 \left\{
  \begin{aligned}
   \frac{\partial \tilde{\eta}}{\partial t}
   &=
   L\tilde{\eta}+G(\psi_1)-G(\psi_2),\quad
   x\in\Omega,\
   t>0, \\
   \nabla
   \tilde{\eta}
   \cdot
   \nu
   \bigg|_{\partial\Omega}
   &=
   0,\quad
   t>0, \\
   \tilde{\eta}(0,x)
   &=
   0,
   \quad
   x\in\Omega.
  \end{aligned}
 \right.
\end{equation}
 Due to zero Neumann boundary and the initial conditions for
 $\tilde{\eta}$, we can use the Schauder estimate \eqref{eq:2-3-2} for
 the system
 \eqref{eq:2-4-11}, hence, we have,
 \begin{equation}
  \label{eq:2-4-3}
  \|\tilde{\eta}\|_{C^{2+\alpha,1+\alpha/2}(\Omega\times[0,T))}
   \leq
   \Cr{const:2.2}
   \|
   G(\psi_1)-G(\psi_2)
   \|_{C^{\alpha,\alpha/2}(\Omega\times[0,T))}.
 \end{equation}
 By direct calculation of the difference of the nonlinear terms
 $G(\psi)$ \eqref{eq:2-1-1}, we have,
 \begin{equation}
  \label{eq:2-4-10}
  G(\psi_1)-G(\psi_2)
   =
   \frac{(b(x,t))^2}{2D(x)}
   f^{\mathrm{eq}}(x)
   (|\nabla \psi_1|^2-|\nabla \psi_2|^2)
   -
   \frac{(b(x,t))^2}{2(D(x))^2}
   f^{\mathrm{eq}}(x)
   (
   \psi_1
   \nabla \psi_1
   -
   \psi_2
   \nabla \psi_2
   )
   \cdot
   \nabla
   D(x).
 \end{equation}

 First, we estimate $\|\frac{(b(x,t))^2}{2D(x)}
 f^{\mathrm{eq}}(x) (|\nabla \psi_1|^2-|\nabla
 \psi_2|^2)\|_{C^{\alpha,\alpha/2}(\Omega\times[0,T))}$. Since,
 \begin{equation*}
  \left|
   |\nabla \psi_1|^2-|\nabla \psi_2|^2
   \right|
   =
  \left|
   (\nabla \psi_1+\nabla \psi_2)
   \cdot
   (\nabla \psi_1-\nabla \psi_2)
   \right|,
 \end{equation*}
 we have due to Lemma \ref{lem:2-2-3} that,
 \begin{equation}
  \label{eq:2-4-5}
  \|
   |\nabla \psi_1|^2-|\nabla \psi_2|^2
   \|_{C^{\alpha,\alpha/2}(\Omega\times[0,T))}
   \leq
   \|
   \nabla \psi_1+\nabla \psi_2
   \|_{C^{\alpha,\alpha/2}(\Omega\times[0,T))}
   \|
   \nabla \psi_1-\nabla \psi_2
   \|_{C^{\alpha,\alpha/2}(\Omega\times[0,T))}.
 \end{equation}
 Since $\psi_1$, $\psi_2 \in X_{M,T}$, we have that $\psi_1-\psi_2=0$
 at $t=0$, and Lemma
 \ref{lem:2-2-1} is applicable here to functions $\psi_1, \psi_2$ and $\psi_1-\psi_2$,
\begin{equation}
  \label{eq:2-4-6}
  \begin{split}
   \|\nabla\psi_1\|_{C^{\alpha,\alpha/2}(\Omega\times[0,T))}
   &\leq
   3  (T^{(1+\alpha)/2}+T^{1/2})\|\psi_1\|_{C^{2+\alpha,1+\alpha/2}(\Omega\times[0,T))}, \\
   \|\nabla\psi_2\|_{C^{\alpha,\alpha/2}(\Omega\times[0,T))}
   &\leq
   3 (T^{(1+\alpha)/2}+T^{1/2})\|\psi_2\|_{C^{2+\alpha,1+\alpha/2}(\Omega\times[0,T))}, \\
   \|\nabla\psi_1-\nabla\psi_2\|_{C^{\alpha,\alpha/2}(\Omega\times[0,T))}
   &\leq
   3 (T^{(1+\alpha)/2}+T^{1/2})\|\psi_1-\psi_2\|_{C^{2+\alpha,1+\alpha/2}(\Omega\times[0,T))}.
  \end{split}
\end{equation}
 Combining estimates \eqref{eq:2-4-5} and \eqref{eq:2-4-6}, we obtain,
 \begin{equation*}
  \begin{split}
   &\quad
   \|
   |\nabla \psi_1|^2-|\nabla \psi_2|^2
   \|_{C^{\alpha,\alpha/2}(\Omega\times[0,T))} \\
   &\leq
   9  (T^{(1+\alpha)/2}+T^{1/2})^2
   (
   \|
   \psi_1
   \|_{C^{2+\alpha,1+\alpha/2}(\Omega\times[0,T))}
   +
   \|
   \psi_2
   \|_{C^{2+\alpha,1+\alpha/2}(\Omega\times[0,T))}
   )
   \|
   \psi_1-\psi_2
   \|_{C^{2+\alpha,1+\alpha/2}(\Omega\times[0,T))}.
  \end{split}
 \end{equation*}
 Therefore, using the strong positivity \eqref{eq:2-0-1},
 the regularity \eqref{eq:2-0-2}, and that functions $\psi_1,\psi_2\in X_{M,T}$, we
 arrive at the inequality,
 \begin{equation}
  \label{eq:2-4-1}
   \left\|
   \frac{(b(x,t))^2}{2D(x)}
   f^{\mathrm{eq}}(x) (|\nabla \psi_1|^2-|\nabla
   \psi_2|^2)
   \right\|_{C^{\alpha,\alpha/2}(\Omega\times[0,T))}
   \leq
   \Cl{const:2.5}M
   (T^{(1+\alpha)/2}+T^{1/2})^2
   \|
   \psi_1-\psi_2
   \|_{C^{2+\alpha,1+\alpha/2}(\Omega\times[0,T))}.
 \end{equation}
Here, constant 
 \begin{equation*}
  \Cr{const:2.5}
   =
   9
   \left\|
    \frac{(b(x,t))^2}{D(x)}
    f^{\mathrm{eq}}(x)
   \right\|_{C^{\alpha,\alpha/2}(\Omega\times[0,T))}
 \end{equation*}
 is a positive constant which depends only on
 $\|b\|_{C^{\alpha,\alpha/2}(\Omega\times[0,T))}$,
 $\|D\|_{C^{\alpha}(\Omega)}$, $\|\phi\|_{C^{\alpha}(\Omega)}$,
 and the constant $\Cr{const:2.8}$ in \eqref{eq:2-0-1}.

 Next, we estimate,
 $
 \|\frac{(b(x,t))^2}{2(D(x))^2}
 f^{\mathrm{eq}}(x)
 (
 \psi_1
 \nabla \psi_1
 -
 \psi_2
 \nabla \psi_2
 )
 \cdot
 \nabla
 D(x)
 \|_{C^{\alpha,\alpha/2}(\Omega\times[0,T))}$. Since, we can write,
 \begin{equation*}
  \psi_1 \nabla \psi_1
   -
   \psi_2 \nabla \psi_2
   =
   \psi_1
   (
   \nabla \psi_1
   -
   \nabla \psi_2
   )
   +
   (
   \psi_1 - \psi_2
   )
   \nabla\psi_2,
 \end{equation*}
  we can use Lemma \ref{lem:2-2-3} again,
 \begin{multline}
  \label{eq:2-4-7}
  \|
   \psi_1 \nabla \psi_1
   -
   \psi_2 \nabla \psi_2
   \|_{C^{\alpha,\alpha/2}(\Omega\times[0,T))} \\
   \leq
   \|
   \psi_1
   \|_{C^{\alpha,\alpha/2}(\Omega\times[0,T))}
   \|
   \nabla \psi_1-\nabla \psi_2
   \|_{C^{\alpha,\alpha/2}(\Omega\times[0,T))}
   +
   \|
   \nabla \psi_2
   \|_{C^{\alpha,\alpha/2}(\Omega\times[0,T))}
   \|
   \psi_1-\psi_2
   \|_{C^{\alpha,\alpha/2}(\Omega\times[0,T))}.
 \end{multline}
 Since $\psi_1$, $\psi_2 \in X_{M,T}$, we have that $\psi_1-\psi_2=0$
 at $t=0$, and thus,  we can use Lemma
 \ref{lem:2-2-1} and \ref{lem:2-2-2} to obtain,
 \begin{equation}
  \label{eq:2-4-8}
  \begin{split}
   \|\psi_1\|_{C^{\alpha,\alpha/2}(\Omega\times[0,T))}
   &\leq
   3  (T+T^{1-\alpha/2})\|\psi_1\|_{C^{2+\alpha,1+\alpha/2}(\Omega\times[0,T))}, \\
   \|\nabla\psi_2\|_{C^{\alpha,\alpha/2}(\Omega\times[0,T))}
   &\leq
   3  (T^{(1+\alpha)/2}+T^{1/2})\|\psi_2\|_{C^{2+\alpha,1+\alpha/2}(\Omega\times[0,T))}, \\
   \|\psi_1-\psi_2\|_{C^{\alpha,\alpha/2}(\Omega\times[0,T))}
   &\leq
   3  (T+T^{1-\alpha/2})\|\psi_1-\psi_2\|_{C^{2+\alpha,1+\alpha/2}(\Omega\times[0,T))}, \\
   \|\nabla\psi_1-\nabla\psi_2\|_{C^{\alpha,\alpha/2}(\Omega\times[0,T))}
   &\leq
   3 (T^{(1+\alpha)/2}+T^{1/2})\|\psi_1-\psi_2\|_{C^{2+\alpha,1+\alpha/2}(\Omega\times[0,T))}.
  \end{split}
 \end{equation}
 Combining \eqref{eq:2-4-7} and \eqref{eq:2-4-8},
 we obtain the estimate,
 \begin{equation*}
  \begin{split}
   &\quad
   \|
   \psi_1 \nabla \psi_1
   -
   \psi_2 \nabla \psi_2
   \|_{C^{\alpha,\alpha/2}(\Omega\times[0,T))} \\
   &\leq
   9 (T^{(1+\alpha)/2}+T^{1/2})(T+T^{1-\alpha/2}) (
   \|
   \psi_1
   \|_{C^{2+\alpha,1+\alpha/2}(\Omega\times[0,T))}
   +
   \|
   \psi_2
   \|_{C^{2+\alpha,1+\alpha/2}(\Omega\times[0,T))}
   )\\
   &\quad
   \times
   \|
   \psi_1-\psi_2
   \|_{C^{2+\alpha,1+\alpha/2}(\Omega\times[0,T))}.
  \end{split}
 \end{equation*}
 Therefore, using the strong positivity
 \eqref{eq:2-0-1}, the regularity \eqref{eq:2-0-2},  and that $\psi_1,\psi_2\in
 X_{M,T}$, we get,
 \begin{equation}
  \label{eq:2-4-2}
  \begin{split}
   &\quad
   \left\|
   \frac{(b(x,t))^2}{2(D(x))^2}
   f^{\mathrm{eq}}(x)
   (
   \psi_1
   \nabla \psi_1
   -
   \psi_2
   \nabla \psi_2
   )
   \cdot
   \nabla
   D(x)
   \right\|_{C^{\alpha,\alpha/2}(\Omega\times[0,T))} \\
   &\leq
   \Cl{const:2.6}M
   (T^{(1+\alpha)/2}+T^{1/2})(T+T^{1-\alpha/2})
   \|
   \psi_1-\psi_2
   \|_{C^{2+\alpha,1+\alpha/2}(\Omega\times[0,T))},
  \end{split}
 \end{equation}
 where constant
 \begin{equation*}
  \Cr{const:2.6}
   =
   9
   \left\|
    \frac{(b(x,t))^2}{(D(x))^2}
    f^{\mathrm{eq}}(x)
    \nabla D(x)
   \right\|_{C^{\alpha,\alpha/2}(\Omega\times[0,T))}
 \end{equation*}
 is a positive constant which depends only on
 $\|b\|_{C^{\alpha,\alpha/2}(\Omega\times[0,T))}$,
 $\|D\|_{C^{1+\alpha}(\Omega)}$, $\|\phi\|_{C^{\alpha}(\Omega)}$, and
 the constant $\Cr{const:2.8}$ in \eqref{eq:2-0-1}.

 Finally, combining \eqref{eq:2-4-3}, \eqref{eq:2-4-10}, \eqref{eq:2-4-1}, and
 \eqref{eq:2-4-2}, we arrive at the estimate,
 \begin{equation*}
  \begin{split}
   \|A\psi_1-A\psi_2\|_{C^{2+\alpha,1+\alpha/2}(\Omega\times[0,T))}
   &=
   \|\tilde{\eta}\|_{C^{2+\alpha,1+\alpha/2}(\Omega\times[0,T))} \\
   &\leq
   \Cl{const:2.7}
   M\kappa(T)
   \|\psi_1-\psi_2\|_{C^{2+\alpha,1+\alpha/2}(\Omega\times[0,T))},
  \end{split}
 \end{equation*}
 where
 $\Cr{const:2.7}=\Cr{const:2.2}\max\{\Cr{const:2.5},\
 \Cr{const:2.6}\}>0$ is a positive constant and,
 \begin{equation}
  \label{eq:2-4-9}
  \kappa(T)
   =
   (T^{(1+\alpha)/2}+T^{1/2})^2
   +
   (T^{(1+\alpha)/2}+T^{1/2})
   (T+T^{1-\alpha/2}).
 \end{equation}
 Note that $\kappa(T)$ is increasing with respect to $T>0$ and
 $\kappa(T)\rightarrow0$ as $T\downarrow0$. Taking $T_1\in (0,T_0]$ such
 that,
 \begin{equation}
  \label{eq:2-4-4}
  \Cr{const:2.7} M\kappa(T_1)< 1,
 \end{equation}
 the solution map $A$ is a contraction mapping on $X_{M,T}$ for $0<T\leq
 T_1$.
\end{proof}

\begin{remark}
 \label{rem:2-1-1}
 Note that, for $h\in C^{2+\alpha,1+\alpha/2}(\Omega\times[0,T))$,
 $\|h\|_{C^{\alpha,\alpha/2}(\Omega\times[0,T))}$ and $\|\nabla
 h\|_{C^{\alpha,\alpha/2}(\Omega\times[0,T))}$ do not vanish as
 $T\downarrow0$ in general. On the other hand, when $\psi=0$ at $t=0$,
 H\"older's norms $\|\psi\|_{C^{\alpha,\alpha/2}(\Omega\times[0,T))}$ and
 $\|\nabla \psi\|_{C^{\alpha,\alpha/2}(\Omega\times[0,T))}$ continuously
 go to $0$ as $T\downarrow0$ by \eqref{eq:2-2-2} and \eqref{eq:2-2-3}.
 Thus,  we derived the explicit time-interval estimates in \eqref{eq:2-4-9}
 and in \eqref{eq:2-4-4}, to ensure that the solution map $A$ is a
 contraction map.

 Further note that, we may show directly the well-definedness and contraction for
 the solution map associated with the problem \eqref{eq:2-1-5}. Still it
 is worth considering variable $\xi$ in \eqref{eq:2-1-2}: we can easily
 construct a contraction mapping $A$ on $X_{M,T}$ and get the estimates
 \eqref{eq:2-3-12} and \eqref{eq:2-4-4} to guarantee the
 well-definedness and contraction for the solution map.
\end{remark}

We are now in position to prove existence of a solution of
\eqref{eq:2-0-10}.

\begin{proof}[Proof of Theorem \ref{thm:2-0-1}]
 Let $M>0$ be a positive constant obtained in Lemma \ref{lem:2-3-1},
 \eqref{eq:2-3-6}, and let $T_1>0$ be a positive constant from
 Lemma \ref{lem:2-4-1}, \eqref{eq:2-4-4}.  Then,  due to Lemma
 $\ref{lem:2-3-1}$ and \ref{lem:2-4-1}, the solution map $A$ is
 a contraction on $X_{M,T_1}$. Therefore,  there is a fixed point $\xi\in
 X_{M,T_1}$,  such that $\xi=A\xi$ and $\xi$ is a classical solution of
 \eqref{eq:2-1-4}. Thus,
 \begin{equation*}
  \rho(x,t)
   =
   \exp
   \left(
    \frac{\xi(x,t)+h_0(x)}{D(x)}
   \right)
 \end{equation*}
 is a classical solution of \eqref{eq:2-0-10}.
\end{proof}

In this section, we constructed a solution $\rho$ using auxiliary variables
$h$ in \eqref{eq:2-1-3} and $\xi$ in \eqref{eq:2-1-2}. Since $\xi=0$ at $t=0$, the time interval of a solution
can be explicitly estimated as in \eqref{eq:2-3-6} and in
\eqref{eq:2-4-4}. As a last step of our construction, we
will show uniqueness of the solution $\rho$ of \eqref{eq:2-0-10} in the
next section.
\section{Uniqueness}
\label{sec:3}

In this section, we show uniqueness for a local solution of
\eqref{eq:2-0-4}. As in Section \ref{sec:2}, uniqueness of a solution
of \eqref{eq:2-0-10} implies the uniqueness of a solution to \eqref{eq:2-0-4}. We make the same
assumptions as we did to show existence of a classical solution of
\eqref{eq:2-0-10}. 
Note that, the contraction property of the solution map $A$ implies
the uniqueness of the fixed point on $X_{M,T}$, but not on $C^{2+\alpha,1+\alpha/2}(\Omega\times[0,T))$. Nevertheless, similar to the proof of the contraction
property of the solution map $A$, Lemma~\ref{lem:2-4-1} in Section
\ref{sec:2}, we show below
uniqueness for a classical solution of \eqref{eq:2-0-10} on $C^{2+\alpha,1+\alpha/2}(\Omega\times[0,T))$.

\begin{theorem}
 \label{thm:3-0-1}
 Let $b(x,t)$, $\phi(x)$, $D(x)$, $\rho_0(x)$ and $\Omega$ satisfy the
 strong positivity \eqref{eq:2-0-1}, the H\"older regularity
 \eqref{eq:2-0-2} for $0<\alpha<1$, and the compatibility for the
 initial data \eqref{eq:2-0-3}, respectively. Then, there exists $T>0$ such that, if
 $\rho_1$, $\rho_2\in C^{2+\alpha,1+\alpha/2}(\Omega\times[0,T))$ are
 classical solutions of \eqref{eq:2-0-10}, then $\rho_1=\rho_2$ on
 $\Omega\times[0,T)$.
\end{theorem}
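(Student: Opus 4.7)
The plan is to mirror the contraction argument used in the proof of Lemma \ref{lem:2-4-1}, applied now to the difference of two hypothetical solutions rather than to two iterates of the solution map $A$. Given classical solutions $\rho_1,\rho_2\in C^{2+\alpha,1+\alpha/2}(\Omega\times[0,T))$ of \eqref{eq:2-0-10}, I would first pass to $h_i:=D(x)\log\rho_i$ (classical solutions of \eqref{eq:2-1-5} by Lemma \ref{lem:2-1-2}, positivity of $\rho_i$ being built into the definition of a classical solution) and then to $\xi_i:=h_i-h_0$ (classical solutions of the zero initial-value problem \eqref{eq:2-1-4} by Lemma \ref{lem:2-1-1}). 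Each $\xi_i$ inherits the regularity $C^{2+\alpha,1+\alpha/2}(\Omega\times[0,T))$, so
\[
 \tilde{M}:=\max_{i=1,2}\|\xi_i\|_{C^{2+\alpha,1+\alpha/2}(\Omega\times[0,T))}
\]
is finite, and both $\xi_1(\cdot,0)=\xi_2(\cdot,0)=0$.

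Setting $w:=\xi_1-\xi_2$, one has $w(x,0)=0$, $\nabla w\cdot\nu|_{\partial\Omega}=0$, and subtracting the two copies of \eqref{eq:2-1-4} gives the linear parabolic equation
\[
 \frac{\partial w}{\partial t}=Lw+\bigl(G(\xi_1)-G(\xi_2)\bigr), \qquad x\in\Omega,\ 0<t<T'.
\]
I would invoke the Schauder estimate (Proposition \ref{prop:2-3-1}) to obtain
\[
 \|w\|_{C^{2+\alpha,1+\alpha/2}(\Omega\times[0,T'))}\leq\Cr{const:2.2}\,\bigl\|G(\xi_1)-G(\xi_2)\bigr\|_{C^{\alpha,\alpha/2}(\Omega\times[0,T'))}
\]
for any $T'\in(0,T]$, and then estimate the source by expanding $G(\xi_1)-G(\xi_2)$ exactly as in \eqref{eq:2-4-10}, applying the product inequality of Lemma \ref{lem:2-2-3} together with the decay estimates of Lemmas \ref{lem:2-2-1}--\ref{lem:2-2-2}. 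The latter are available precisely because $\xi_1$, $\xi_2$ and $w$ all vanish at $t=0$. This produces an inequality of the same form as in the proof of Lemma \ref{lem:2-4-1},
\[
 \bigl\|G(\xi_1)-G(\xi_2)\bigr\|_{C^{\alpha,\alpha/2}(\Omega\times[0,T'))}\leq\tilde C\,\tilde M\,\kappa(T')\,\|w\|_{C^{2+\alpha,1+\alpha/2}(\Omega\times[0,T'))},
\]
with $\kappa(T')$ as in \eqref{eq:2-4-9}, so $\kappa(T')\downarrow 0$ as $T'\downarrow 0$.

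Choosing $T_*\in(0,T]$ small enough that $\Cr{const:2.2}\tilde C\tilde M\kappa(T_*)<1$ then forces $\|w\|_{C^{2+\alpha,1+\alpha/2}(\Omega\times[0,T_*))}=0$, i.e.\ $\xi_1\equiv\xi_2$ and hence $\rho_1\equiv\rho_2$ on $[0,T_*)$. To extend this local identity to the whole interval $[0,T)$, I would argue by connectedness: the set $\{t\in[0,T):\rho_1(\cdot,t)=\rho_2(\cdot,t)\}$ is nonempty (it contains $t=0$), closed by the continuity of $\rho_1-\rho_2$, and open since the argument above can be re-run at any interior time of agreement by treating $\rho_1(\cdot,t_0)$ as a new common initial datum. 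The main obstacle is the dependence of the threshold $T_*$ on the a priori unknown quantity $\tilde M$, which is not controlled by the fixed-point constant $M$ of Lemma \ref{lem:2-3-1} since the two solutions are not assumed to arise from the fixed-point construction; the connectedness step is exactly what converts this $\tilde M$-dependent local uniqueness into uniqueness on the full interval $[0,T)$ asserted in the theorem.
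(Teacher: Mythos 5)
Your proposal is correct and follows essentially the same route as the paper's own proof: reduce to the zero-initial-value problem \eqref{eq:2-1-4} via Lemmas \ref{lem:2-1-2} and \ref{lem:2-1-1}, apply the Schauder estimate to the difference, bound $G(\xi_1)-G(\xi_2)$ using Lemma \ref{lem:2-2-3} and the decay estimates \eqref{eq:2-2-2}--\eqref{eq:2-2-3} with the constant $M(T)=\max_i\|\xi_i\|_{C^{2+\alpha,1+\alpha/2}}$, and absorb for small $T$. The only difference is your final connectedness/continuation step: the paper instead handles the dependence of the threshold on $M(T)$ by noting that $M(T)$ is nondecreasing and $\kappa(T)\downarrow 0$ as $T\downarrow 0$, and stops at the resulting local statement rather than propagating agreement over the full interval.
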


\begin{proof}
 First,  note that from Lemma \ref{lem:2-1-2} and Lemma \ref{lem:2-1-1},
 it is sufficient to show uniqueness for a solution of \eqref{eq:2-1-4}.
 Hereafter,  we will show the uniqueness for a classical solution of the
 problem \eqref{eq:2-1-4}.

 Let $\xi_1, \xi_2\in C^{2+\alpha,1+\alpha/2}(\Omega\times[0,T))$ be
 two distinct solutions of \eqref{eq:2-1-4}. We will prove that $\xi_1=\xi_2$ in
 $\Omega\times[0,T)$ for sufficiently small $T>0$ using
 contradiction argument. Assume that $\xi_1$ and $\xi_2$ are two
 distinct solutions  in $\Omega\times[0,T)$ for any
 $T>0$. Then,  subtracting $\xi_1$ from $\xi_2$, we obtain the equation,
 \begin{equation*}
  \frac{\partial (\xi_1-\xi_2)}{\partial t}
   =
   L(\xi_1-\xi_2)+G(\xi_1)-G(\xi_2),
 \end{equation*}
 where $L$ and $G$ are defined in \eqref{eq:2-1-1}. Since
 $\xi_1-\xi_2=0$ at $t=0$, we can apply the Schauder estimates
 \eqref{eq:2-3-2}, and we obtain,
 \begin{equation}
  \label{eq:3-0-6}
  \|\xi_1-\xi_2\|_{C^{2+\alpha,1+\alpha/2}(\Omega\times[0,T))}
   \leq
   \Cr{const:2.2}
   \|
   G(\xi_1)-G(\xi_2)
   \|_{C^{\alpha,\alpha/2}(\Omega\times[0,T))}.
 \end{equation}

 As in the proof of the Lemma~\ref{lem:2-4-1}, we estimate the norm of,
 \begin{equation}
  \label{eq:3-0-1}
   G(\xi_1)-G(\xi_2)
   =
   \frac{(b(x,t))^2}{2D(x)}
   f^{\mathrm{eq}}(x)
   (|\nabla \xi_1|^2-|\nabla \xi_2|^2)
   -
   \frac{(b(x,t))^2}{2(D(x))^2}
   f^{\mathrm{eq}}(x)
   (
   \xi_1
   \nabla \xi_1
   -
   \xi_2
   \nabla \xi_2
   )
   \cdot
   \nabla
   D(x).
 \end{equation}
 Let $M(T):=\max\{
 \|\xi_1\|_{C^{2+\alpha,1+\alpha/2}(\Omega\times[0,T))},
 \|\xi_2\|_{C^{2+\alpha,1+\alpha/2}(\Omega\times[0,T))}\}>0$.  Then,
 $\xi_1, \xi_2\in X_{M(T),T}$, where $X_{M(T),T}$ is defined in
 \eqref{eq:2-3-9}, and thus,  we have the same estimates of
 \eqref{eq:2-4-1} and \eqref{eq:2-4-2}, namely we have,
 \begin{equation}
  \label{eq:3-0-4}
  \left\|
   \frac{(b(x,t))^2}{2D(x)}
   f^{\mathrm{eq}}(x) (|\nabla \xi_1|^2-|\nabla
   \xi_2|^2)
   \right\|_{C^{\alpha,\alpha/2}(\Omega\times[0,T))}
   \leq
   \Cr{const:2.5}M(T)
   (T^{(1+\alpha)/2}+T^{1/2})^2
   \|
   \xi_1-\xi_2
   \|_{C^{2+\alpha,1+\alpha/2}(\Omega\times[0,T))},
 \end{equation}
 and
 \begin{equation}
  \label{eq:3-0-5}
  \begin{split}
   &\quad
   \left\|
   \frac{(b(x,t))^2}{2(D(x))^2}
   f^{\mathrm{eq}}(x)
   (
   \xi_1
   \nabla \xi_1
   -
   \xi_2
   \nabla \xi_2
   )
   \cdot
   \nabla
   D(x)
   \right\|_{C^{\alpha,\alpha/2}(\Omega\times[0,T))} \\
   &\leq
   \Cr{const:2.6}M(T)
   (T^{(1+\alpha)/2}+T^{1/2})(T+T^{1-\alpha/2})
   \|
   \xi_1-\xi_2
   \|_{C^{2+\alpha,1+\alpha/2}(\Omega\times[0,T))},
  \end{split}
 \end{equation}
 where constants,
 \begin{equation}
  \Cr{const:2.5}
   =
   9
   \left\|
    \frac{(b(x,t))^2}{D(x)}
    f^{\mathrm{eq}}(x)
   \right\|_{C^{\alpha,\alpha/2}(\Omega\times[0,T))}, \mbox{ and }
   \quad
   \Cr{const:2.6}
   =
   9
   \left\|
    \frac{(b(x,t))^2}{(D(x))^2}
    f^{\mathrm{eq}}(x)
    \nabla D(x)
   \right\|_{C^{\alpha,\alpha/2}(\Omega\times[0,T))}.
 \end{equation} 
 Combining \eqref{eq:3-0-1}, \eqref{eq:3-0-4} and \eqref{eq:3-0-5}, we
 obtain the estimate,
 \begin{equation}
  \label{eq:3-0-8}
  \|
   G(\xi_1)-G(\xi_2)
   \|_{C^{\alpha,\alpha/2}(\Omega\times[0,T))}
   \leq
   \Cl{const:3.1}
   M(T)\kappa(T)
   \|\xi_1-\xi_2\|_{C^{2+\alpha,1+\alpha/2}(\Omega\times[0,T))},
 \end{equation}
 where $\Cr{const:3.1}=\max\{\Cr{const:2.5},\ \Cr{const:2.6}\}>0$ and,
 \begin{equation}
  \kappa(T)
   =
   (T^{(1+\alpha)/2}+T^{1/2})^2
   +
   (T^{(1+\alpha)/2}+T^{1/2})
   (T+T^{1-\alpha/2}).
 \end{equation}
 Note that $M(T)$ and $\kappa(T)$ are increasing with respect to $T>0$, and
 $\kappa(T)\rightarrow0$ as $T\downarrow0$. Therefore, take $T>0$ such that,
 \begin{equation}
  \label{eq:3-0-7}
   \Cr{const:2.2}
   \Cr{const:3.1}  M(T)\kappa(T)<1.
 \end{equation}
 Then combining \eqref{eq:3-0-6}, \eqref{eq:3-0-8}, and \eqref{eq:3-0-7},
 we obtain that,
 \begin{equation}
  \begin{split}
   \|\xi_1-\xi_2\|_{C^{2+\alpha,1+\alpha/2}(\Omega\times[0,T))}
   &\leq
   \Cr{const:2.2}
   \Cr{const:3.1}  M(T)\kappa(T)
   \|
   \xi_1-\xi_2
   \|_{C^{2+\alpha,1+\alpha/2}(\Omega\times[0,T))} \\  
   &<
   \|
   \xi_1-\xi_2
   \|_{C^{2+\alpha,1+\alpha/2}(\Omega\times[0,T))},  
  \end{split} 
 \end{equation}
 which is a contradiction. Thus, we established that $\xi_1=\xi_2$ in
 $\Omega\times[0,T)$.
\end{proof}

\section{Conclusion}\label{sec:4}
\par In this paper,  we presented a new nonlinear Fokker-Planck equation
which satisfies a special energy law with the inhomogeneous absolute
 temperature of the system. Such models emerge as a part of grain
growth modeling in polycrystalline materials. We showed local
existence and uniqueness of the solution of the Fokker-Planck
system. Large time asymptotic analysis of the proposed Fokker-Planck
model, as well as numerical simulations of the system will
be presented in a forthcoming paper
\cite{Katya-Kamala-Chun-Masashi}. As a part of our future research, we
will further extend such Fokker-Planck systems to the modeling of the
evolution of the
grain boundary network that undergoes disappearance/critical events, e.g. \cite{epshteyn2021stochastic,Katya-Chun-Mzn4}.

\section*{Acknowledgments}
Yekaterina
Epshteyn acknowledges partial support of NSF DMS-1905463 and of NSF DMS-2118172,  Masashi Mizuno
acknowledges partial support of JSPS KAKENHI Grant No. JP18K13446 and Chun Liu acknowledges partial support of
NSF DMS-1950868 and NSF DMS-2118181.
\bibliographystyle{plain}
\bibliography{references}

\end{document}